\DeclareRobustCommand*{\bfseries}{%
  \not@math@alphabet\bfseries\mathbf
  \fontseries\bfdefault\selectfont
  \boldmath
}
\tikzstyle{vertex}=[circle, draw, inner sep=1pt, minimum size=4pt]
\newcommand{\vertex}{\node[vertex]}
\tikzstyle{ann} = [fill=white,font=\footnotesize,inner sep=1pt]
\tikzstyle{arrow} = [thick,<-->,>=stealth]
\newcommand{\ideg}[1]{\mathrm{ideg}}
\newtheorem{theorem}{Theorem}[section]
\newtheorem{definition}[theorem]{Definition}
\newtheorem{lemma}[theorem]{Lemma}
\newtheorem{proposition}[theorem]{Proposition}
\newtheorem{corollary}[theorem]{Corollary}
\newtheoremstyle{casesty}         % Name
{}                   % Above skip
{}                   % Below skip
{\upshape}           % Body font
{}                   % Indent
{\itshape}         % Head font
{:-}                   % Head body punct
{1em}                % Space after head
{}                   % Heading
\theoremstyle{casesty}
\newtheorem{case}{Case}%[section]
\newtheoremstyle{subcasesty}         % Name
{}                   % Above skip
{}                   % Below skip
{\upshape}           % Body font
{}                   % Indent
{\itshape}          % Head font
{:-}                   % Head body punct
{1em}                % Space after head
{}                   % Heading
\theoremstyle{subcasesty}
\newtheoremstyle{schemesty}         % Name
{}                   % Above skip
{}                   % Below skip
{\upshape}           % Body font
{}                   % Indent
{\itshape}          % Head font
{:-}                   % Head body punct
{1em}                % Space after head
{}                   % Heading
\theoremstyle{schemesty}
\newcommand{\keywordsname}{Keywords}
\newcommand{\mscname}{MSC 2020}
\definecolor{ududff}{rgb}{0.30196078431372547,0.30196078431372547,1}
\definecolor{xdxdff}{rgb}{0.49019607843137253,0.49019607843137253,1}
\title{\sc $s$-Shunt Intersection Graph of a Graph}
\author{{\bf Vinny Susan Prebhath$^\ast$, Sudev Naduvath$^\dag$}}
\affil{\small Centre for Mathematical Needs\\Department of Mathematics\\ Christ University, Bangalore, India. \\ $^\ast${\tt vinny.prebhath@res.christuniversity.in}\\ 
$^\dag${\tt sudev.nk@christuniversity.in}
}
\date{}
\begin{document}
\maketitle

\hrule
\begin{abstract}
The intersection graph of a family of sets $\{S_{1},S_{2},\ldots,S_{n}\}$ is a graph whose vertex set is $\{S_{1},S_{2},\ldots,S_{n}\}$ and two distinct vertices are adjacent if the intersection of the corresponding sets is non-empty. Different types of intersection graphs have been studied depending on the nature of sets taken as the vertex set. A study on a particular type of intersection graph called $s$-shunt intersection graph, generated from the $s$-arcs of a given graph is initiated in this paper.
\keywords{Intersection graphs, $s$-arcs, $s$-shunt intersection graphs.}
\vspace{0.2cm}

\msc{05C62.}
\end{abstract}\vspace{0.2cm}
\hrule

\section{Introduction}

For basic terminology of graph theory, we refer to \cite{harary2018graph} and for further concepts and results in intersection graph theory, see \cite{mckee1999topics}. A multiset is a family of sets $\{S_{1}, S_{2},\ldots, S_{n}\}$ which allows the possibility that $S_{i} = S_{j}$ even though $i \neq j$. The \emph{intersection graph} of a family of sets $\mathscr{F}=\{S_{1}, S_{2},\ldots, S_{n}\}$, denoted by $\Omega(\mathscr{F})$, is the graph whose vertex set is $\mathscr{F}$ and $S_{i}$ is adjacent to $S_{j}$ if and only if $i\neq j$ and $S_{i} \cap S_{j} \neq \emptyset$. A graph $G$ is an intersection graph if there exists a family $\mathscr{F}$ such that $G \cong \omega(\mathscr{F})$ where $\mathscr{F}$ is then called a \emph{set representation} of $G$. Every graph is an intersection graph of a family of subgraphs of a graph. Different types of intersection graphs have been introduced and studied depending on the nature of the sets $S_{1}, S_{2},\ldots,S_{n}$. For different types of intersection graphs, refer to \cite{mckee1999topics}. 

One of the most studied intersection graphs is the line graph of a graph. For a graph $G$, the \emph{line graph} $L(G)$ of a graph $G$ is an intersection graph defined on the edge set of $G$ and two distinct vertices of $L(G)$ are adjacent if and only if the two corresponding edges of $G$ share a common vertex. The concept of line graph, initially introduced in \cite{whitney1992congruent}, has been studied independently by many authors using different names such as interchange graph, derived graph, derivative, covering graph, edge-to-vertex dual and adjoint. Certain characterisations of line graphs was studied in \cite{ray1967characterization}. Forbidden characterisation of line graphs was studied in \cite{beineke1970characterizations}. 

The line graph of a graph $G$ has been generalised in different ways. One of the first studied generalisation of line graphs is the path graph which was introduced in \cite{broersma1989path}. A \emph{path graph} of a graph $G$, denoted by $P_{k}(G)$, is a graph with vertex set as the set of all paths on $k$ vertices of $G$ and two distinct vertices are adjacent if the corresponding paths form a $P_{k+1}$ or a $C_{k}$ in $G$. An \emph{$\ell$-link} of $G$ is a walk of length $\ell \geq 0$ in which consecutive edges are different. The \emph{$\ell$-link graph} of $G$ which is a generalisaton of line graphs and path graphs, is a graph whose vertex set is the set of all $\ell$-links of $G$ and two vertices are joined by $\mu \geq 0$ edges if the vertices correspond to two subsequences of each of $\mu$ $(\ell+1)$-links of $G$ (see \cite{jia2017hadwiger}). Some other similar graphs studied are $(P_{3};I)$ transformation of a graph $G$ (see \cite{carroll1995subgraph}), $k$-edge graph (see \cite{prisner1994common}) and $(X,Y)$-intersection graph (see \cite{cai1996generalization}).
 
A graph $G$ is said to be \emph{vertex-transitive} if the automorphism group of $G$ acts transitively on the vertex set of $G$. A \emph{permutation group} $X$ on $V$ is said to be transitive if for any two elements $x$ and $y$ of $V$, there exists an element $\sigma \in X$ such that $\pi$ maps $x$ to $y$. For an integer $s \geq 1$, an \emph{$s$-arc} in a graph $G$ is a sequence of $(s+1)$ vertices $(v_{1},v_{2},...,v_{s},v_{s+1})$ of $G$ such that for all $1 \leq i \leq s$, $v_{i} \sim v_{i+1}$ and $v_{i} \neq v_{i+2}$ (see \cite{app1}). In an $s$-arc, the same vertex can be repeated although in all cases, this will not happen. If the automorphism group $X$ acts transitively on the $s$-arcs of $G$, then $G$ is said to be $s$-arc transitive. A vertex-transitive graph is a $0$-arc transitive graph whereas an arc-transitive graph is a $1$-arc transitive graph (refer \cite{app1}). 

Let $s \geq 1$ and $\alpha = (v_{0},v_{1},v_{2},\ldots,v_{s})$ be an $s$-arc in a graph $G$. For ease of usage, we write $(v_{0},v_{1},v_{2},\ldots,v_{s})$ as $v_{0}v_{1}v_{2}\ldots v_{s}$. Then the head of the $s$-arc, denoted by $head(\alpha)$ is the $(s-1)$-arc $v_{1}v_{2}\ldots v_{s}$ and its tail, denoted by $tail(\alpha)$ is defined to be the $(s-1)$-arc $v_{0}v_{1}\ldots v_{s-1}$. If $\alpha$ and $\beta$ are $s$-arcs, then $\beta$ follows $\alpha$ if there is an $(s+1)$-arc, say $\gamma$, such that $head(\gamma) = \beta$ and $tail(\gamma)=\alpha$. This can be described as $\alpha$ being \emph{shunted onto} $\beta$, which means $\alpha$ can be pushed one step onto $\beta$ (see \cite{app1}). The arc graph, denoted by $X^{s}(G)$, is defined to be a digraph with vertex set as the set of all $s$-arcs of $G$ and a directed edge exists from $\alpha$ to $\beta$ if $\alpha$ can be shunted onto $\beta$ (see \cite{app1}).

Motivated by the above mentioned studies, we define a new type of intersection graph from the set of all $s$-arcs of $G$ and the properties of these derived graphs are studied. Even though vertices can be repeated in an $s$-arc, we consider only those $s$-arcs where every vertex in the sequence is distinct. This is because it becomes rather challenging when we consider the intersection graphs on the set of all $s$-arcs of $G$ that can be shunted onto another $s$-arc in $G$. Also, since vertices can be repeated in an $s$-arc, $s$ can take any arbitrary value and the intersection graph for large values of $s$ will always be a complete graph. Hence, our first attempt to study intersection graphs of $s$-arcs in $G$ are of those $s$-arcs on distinct vertices which can be shunted onto some other $s$-arcs in $G$ on an $(s+1)$-arc where every vertex is distinct and then try to extend these results to the intersection graph that will be defined on all $s$-arcs in $G$ that can be shunted onto other $s$-arcs in $G$. 

\section{s-Shunt Intersection Graphs}

\begin{definition} \label{Defn1}{\rm
	Let $S$ be the set of all $s$-arcs on distinct vertices in $G$. Then, the \textit{$s$-shunt intersection graph} of $G$ (in short, ssi-graph), denoted by $A_{s}(G)$, is the graph obtained as follows:
	\begin{enumerate}[label=(\roman*),left=0.5cm]
		\item An element $\alpha \in S$ belongs to $V(A_{s}(G))$ if there exists another $s$-arc $\beta \in S$ such that $\alpha$ can be shunted onto $\beta$ on an $(s+1)$-arc on distinct vertices.
		\item Two vertices $\alpha$ and $\beta$ in $V(A_{s}(G))$ are adjacent in $A_{s}(G)$ if and only if they intersect at a minimum of one vertex in $G$.
	\end{enumerate} 
}\end{definition}

Throughout the paper, unless mentioned otherwise, we keep $s \geq 1$. It is to be noted that the upper bound of $s$ as per Definition \ref{Defn1} depends on the graph classes under consideration. For any graph $G$ and any possible value of $s$, if $\alpha_{i} = u_{i}u_{i+1}\ldots u_{i+s}$ is an $s$-arc in $G$, then the $s$-arc obtained by reversing the order of the vertices of $\alpha_{i}$, denoted by $\alpha_{i}' = u_{i+s}u_{i+s-1}\ldots u_{i}$, is also an $s$-arc in $G$. By the left-end vertex and the right end-vertex of $\alpha_{i}$, we mean the vertices $u_{i}$ and $u_{i+s}$ of $\alpha_{i}$ respectively. Unless mentioned otherwise, the graphs are simple, undirected and finite. 

An example of the ssi-graph of a bistar $S_{3,2}$, where $s \in \{1,2\}$ is illustrated in Figure \ref{fig:Example}. The $2$-arc $x_{1}x_{2}v_{1}$ is not a vertex in $A_{2}(S_{3,2})$ since $x_{1}x_{2}v_{1}$ cannot be shunted onto any other $2$-arc in $S_{3,2}$. Note that the $3$-si graph does not exist for any bistar since for any $3$-arc in $S_{m,n}$, there is no $3$-arc in $S_{m,n}$ that can be shunted onto any other $3$-arc in $S_{m,n}$.

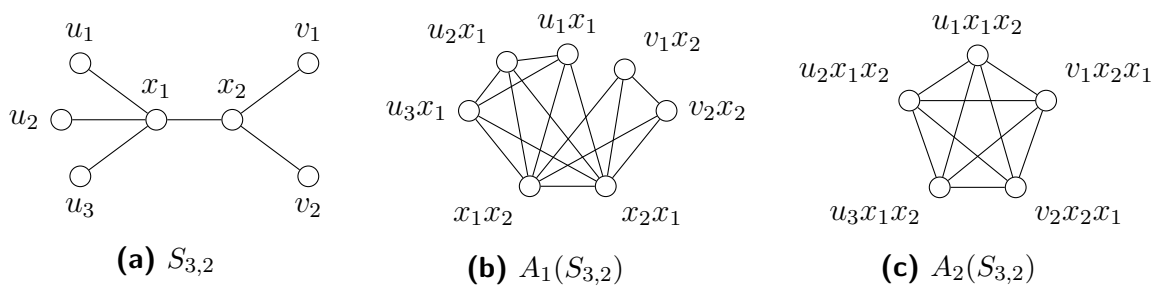
\begin{figure}[h]
	\centering
	\begin{subfigure}[h]{.30\textwidth}
		\centering
		\begin{tikzpicture}%[scale=0.5]
			\vertex[inner sep=2.75pt](u1) at (0,0) [label=90:$u_{1}$]{};
			\vertex[inner sep=2.75pt](x1) at (1,-0.75) [label=90:$x_{1}$]{};
			\vertex[inner sep=2.75pt](x2) at (2,-0.75) [label=90:$x_{2}$]{};
			\vertex[inner sep=2.75pt](u2) at (-0.25,-0.75) [label=180:$u_{2}$]{};
			\vertex[inner sep=2.75pt](u3) at (0,-1.5) [label=270:$u_{3}$]{};
			\vertex[inner sep=2.75pt](v1) at (3,0) [label=90:$v_{1}$]{};
			\vertex[inner sep=2.75pt](v2) at (3,-1.5) [label=270:$v_{2}$]{};
			
			\draw (u1) edge node[] {} (x1);
			\draw (u2) edge node[] {} (x1);
			\draw (u3) edge node[] {} (x1);
			\draw (x1) edge node[] {} (x2);
			\draw (v1) edge node[] {} (x2);
			\draw (v2) edge node[] {} (x2);
			;
		\end{tikzpicture}
		\caption{$S_{3,2}$}	
	\end{subfigure}
	\hfill
	\begin{subfigure}[h]{.30\textwidth}
		\centering
		\begin{tikzpicture}%[scale=0.5]
			\vertex[inner sep=2.75pt](u1) at (1.5,1) [label=90:$u_{1}x_{1}$]{};
			\vertex[inner sep=2.75pt](x1) at (1,-0.75) [label=245:$x_{1}x_{2}$]{};
			\vertex[inner sep=2.75pt](x2) at (2,-0.75) [label=295:$x_{2}x_{1}$]{};
			\vertex[inner sep=2.75pt](u2) at (0.7,0.9) [label=135:$u_{2}x_{1}$]{};
			\vertex[inner sep=2.75pt](u3) at (0.2,0.25) [label=180:$u_{3}x_{1}$]{};
			\vertex[inner sep=2.75pt](v1) at (2.25,0.8) [label=45:$v_{1}x_{2}$]{};
			\vertex[inner sep=2.75pt](v2) at (2.8,0.25) [label=0:$v_{2}x_{2}$]{};
			
			\draw (u1) edge node[] {} (x1);
			\draw (u2) edge node[] {} (x1);
			\draw (u3) edge node[] {} (x1);
			\draw (x1) edge node[] {} (x2);
			\draw (v1) edge node[] {} (x2);
			\draw (v2) edge node[] {} (x2);
			
			\draw (u1) edge node[] {} (x2);
			\draw (u2) edge node[] {} (x2);
			\draw (u3) edge node[] {} (x2);
			\draw (v1) edge node[] {} (x1);
			\draw (v2) edge node[] {} (x1);
			
			\draw (u1) edge node[] {} (u2);
			\draw (u1) edge node[] {} (u3);
			\draw (u2) edge node[] {} (u3);
			\draw (v1) edge node[] {} (v2);
			;
		\end{tikzpicture}
		\caption{$A_{1}(S_{3,2})$}
	\end{subfigure}
	\qquad
	\begin{subfigure}[h]{.30\textwidth}
		\centering
		\begin{tikzpicture}%[scale=0.5]
			\vertex[inner sep=2.75pt](u1) at (1.5,0.25) [label=90:$u_{1}x_{1}x_{2}$]{};
			\vertex[inner sep=2.75pt](u2) at (0.6,-0.35) [label=135:$u_{2}x_{1}x_{2}$]{};
			\vertex[inner sep=2.75pt](u3) at (1,-1.5) [label=225:$u_{3}x_{1}x_{2}$]{};
			\vertex[inner sep=2.75pt](u4) at (2,-1.5) [label=315:$v_{2}x_{2}x_{1}$]{};
			\vertex[inner sep=2.75pt](u5) at (2.4,-0.35) [label=45:$v_{1}x_{2}x_{1}$]{};
			
			\draw (u1) edge node[] {} (u2);
			\draw (u2) edge node[] {} (u3);
			\draw (u3) edge node[] {} (u4);
			\draw (u4) edge node[] {} (u5);
			\draw (u5) edge node[] {} (u1);
			
			\draw (u1) edge node[] {} (u3);
			\draw (u2) edge node[] {} (u4);
			\draw (u3) edge node[] {} (u5);
			
			\draw (u1) edge node[] {} (u4);
			\draw (u2) edge node[] {} (u5);
			;
		\end{tikzpicture}
		\caption{$A_{2}(S_{3,2})$}
	\end{subfigure}
	\caption{A bistar and its $1$si-graph and $2$si-graph.}
	\label{fig:Example}
\end{figure}

The length of the longest path between two vertices $u$ and $v$ is the \emph{detour distance} of a graph $G$ which is denoted by $D(u,v)$ (refer \cite{harary2018graph}). The \emph{detour eccentricity} $e_{D}(v)$ of a vertex $v$ is the detour distance from $v$ to a vertex farthest from $v$. The maximum detour eccentricity among all vertices of $G$ is the \emph{detour diameter} of a graph which is denoted as $diam_{D}(G)$ (refer \cite{chartrand2004distance}). For ease of usage, we let $s^{\ast}$ be the detour diameter of a graph $G$. For example, in a cycle $C_{n}$, the possible values of $s$ are $1,2,\ldots,n-2$. In this case, $s^{\ast}=n-2$. Note that if 
$G$ is a tree, then $s^{\ast}=diam(G)$.

\begin{theorem} \label{Asconn1}
	The ssi-graph $A_{s}(G)$ of a graph $G$, if exists, is connected if and only if $G$ contains exactly one component with detour diameter at least $s+1$.
\end{theorem}
\begin{proof}
	Let $G$ be a connected graph with detour diameter at least $s+1$. Assume that the ssi-graph $A_{s}(G)$ is disconnected. For each $s$-arc $\alpha_{i}$ in $G$ that is a vertex of $A_{s}(G)$, there exists at least one other $s$-arc $\alpha_{j}$ in $G$ such that $\alpha_{i}$ can be shunted onto $\alpha_{j}$. Also, the $s$-arc $\alpha_{j}'$ can be shunted onto $\alpha_{i}'$ implying that each component of $A_{s}(G)$ consists of at least two vertices. Since $A_{s}(G)$ is disconnected, the vertices of the $s$-arc, say $\alpha$ of $G$, corresponding to the vertex $\alpha$ in a component $Q_{i}$ of $A_{s}(G)$, are not adjacent to any vertices of the $s$-arc, say $\beta$ of $G$, corresponding to the vertex $\beta$ in any other component $Q_{j};i\ne j$ of $A_{s}(G)$. However, there exists at least one path $P$ in $G$ with one end vertex in $\alpha$ and the other in $\beta$. Then, there exists a shuntable $s$-arc $\gamma$ in $G$ consisting of vertices of $\alpha,\beta$ and $P$. Hence, $\gamma$ is a vertex of $A_{s}(G)$ which is adjacent to both $\alpha$ and $\beta$, contradicting the fact that $\alpha$ and $\beta$ are in different components of $A_{s}(G)$. Therefore, the ssi-graph of a connected graph is always connected. Also, note that any component of $G$ having detour diameter less than $s+1$, if exists, does not contribute any vertex to $A_{s}(G)$. %the vertex set of the ssi-graph of graphs with detour  diameter less than $s+1$ is always empty. Therefore, if $G$ has more than one such components, each component will yield a connected component in $A_{s}(G)$. But since the vertex set of $s$-arcs from the disjoint components are distinct, the resulting ssi-graph is disconnected. Hence, the number of components of $A_{s}(G)$ is same as the number of components of $G$ with detour diameter at least $s+1$. Hence, if $G$ is a graph with exactly one component with detour diameter at least $s+1$, then $A_{s}(G)$ is connected.
	
	%Since $A_{s}(G)$ is disconnected, the vertices of $s$-arcs of $G$, corresponding to the vertices of a component of $A_{s}(G)$, are not adjacent to any vertices of the $s$-arcs of $G$, corresponding to the vertices of any other components of $A_{s}(G)$. However, there exists a path in $G$ between the vertices in $G$, corresponding to the vertices of $A_{s}(G)$ in the first component, and the vertices in $G$, corresponding to the vertices of $A_{s}(G)$ in the other components. This implies that there exists an $s$-arc, that is a vertex of $A_{s}(G)$, with vertices in $G$ from an $s$-arc in the first component of $A_{s}(G)$ and vertices in $G$ from an $s$-arc in some other component of $A_{s}(G)$, implying that the number of components of $A_{s}(G)$ is less than $k$, a contradiction. Therefore, the ssi-graph of a connected graph is always connected. Also, the vertex set of the ssi-graph of graphs with detour  diameter less than $s+1$ is always empty. Therefore, if $G$ has more than one such components, each component will yield a connected ssi-graph. But since the vertex set of $s$-arcs from the disjoint components are distinct, the resulting ssi-graph is disconnected. Hence, the number of components of $A_{s}(G)$ is same as the number of components of $G$ with detour diameter at least $s+1$. Hence, if $G$ is a graph with exactly one component with detour diameter at least $s+1$, then $A_{s}(G)$ is connected. %then it is necessary that $G$ must contain at most one component with detour diameter at least $s+1$.
	
	If $A_{s}(G)$ is a non-empty connected graph, then there exists a path in $G$ between the end vertices of any two $s$-arcs of $G$, that can be shunted onto some other $s$-arc of $G$. For a graph $G$ with detour diameter at least $s+1$, every vertex of $G$ need not be a vertex of any $s$-arc of $G$, that can be shunted onto some other $s$-arc of $G$. If there exists a vertex $v$ of $G$ that is not a vertex of any of the $s$-arcs of $G$, then the detour eccentricity of $v$ is less than $s+1$. As a consequence, there may exist a component in $G$ with detour diameter less than $s+1$. %Hence, these vertices with detour eccentricity less than $s+1$ are vertices of a component of $G$ with either detour diameter at least $s+1$ or detour diameter less than $s+1$. 
	Therefore, all vertices of $A_{s}(G)$ must be $s$-arcs from the same component of $G$ with detour diameter at least $s+1$.
\end{proof}

Since the ssi-graph of any graph with detour diameter at most $s$ is a null graph, the results proved henceforth are not only true for connected graphs, but also for graphs $G$ where only one component of $G$ has detour diameter at least $s+1$. 

\begin{proposition} \label{Asconn2}
	For any connected graph $G$ of order $n$, $A_{s}(G)$ is complete when $s \geq \Big \lfloor \frac{n}{2} \Big \rfloor$.
\end{proposition}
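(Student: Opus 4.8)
The plan is to prove completeness by a short counting (pigeonhole) argument applied to the vertex sets of the $s$-arcs, so no structural analysis of $G$ is really needed. First I would record the key observation coming from Definition \ref{Defn1}: every vertex of $A_{s}(G)$ is, by hypothesis, an $s$-arc \emph{on distinct vertices}, and hence consists of exactly $s+1$ distinct vertices of $G$. Given any two vertices $\alpha$ and $\beta$ of $A_{s}(G)$, write $V(\alpha)$ and $V(\beta)$ for the sets of vertices of $G$ that appear in these two arcs; then $|V(\alpha)| = |V(\beta)| = s+1$, and both are subsets of $V(G)$, which has $n$ elements.

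Next I would apply inclusion--exclusion. Since $V(\alpha) \cup V(\beta) \subseteq V(G)$, we have $|V(\alpha) \cup V(\beta)| \le n$, and therefore
\[
|V(\alpha) \cap V(\beta)| = |V(\alpha)| + |V(\beta)| - |V(\alpha) \cup V(\beta)| \ge 2(s+1) - n .
\]
It then remains only to check that the hypothesis $s \ge \lfloor n/2 \rfloor$ makes the right-hand side at least $1$. I would split this into the even and odd cases for $n$: when $n$ is even, $s \ge n/2$ gives $2(s+1) - n \ge 2$, and when $n$ is odd, $\lfloor n/2 \rfloor = (n-1)/2$ gives $2(s+1) - n \ge 1$. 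In both cases $|V(\alpha) \cap V(\beta)| \ge 1$, so the two arcs share at least one vertex of $G$.

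Finally, by condition (ii) of Definition \ref{Defn1}, sharing at least one vertex of $G$ is precisely the adjacency criterion in $A_{s}(G)$; hence every pair of distinct vertices of $A_{s}(G)$ is adjacent, and $A_{s}(G)$ is complete. I expect no real obstacle: the only point requiring a little care is the parity split in the final inequality, which amounts to comparing $\lfloor n/2 \rfloor$ with $(n-1)/2$ (these agree for odd $n$ and differ by $\tfrac12$ for even $n$, so the bound $s \ge (n-1)/2$ holds throughout). It is also worth remarking at the outset that the claim is vacuous unless $A_{s}(G)$ has at least two vertices, so the substance of the argument is exactly the adjacency of an arbitrary pair.
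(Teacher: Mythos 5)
Your proposal is correct and is essentially the same argument as the paper's, which simply asserts that for $s \ge \lfloor n/2 \rfloor$ any two $(s+1)$-element vertex sets drawn from the $n$ vertices of $G$ must intersect; you have merely written out the inclusion--exclusion bound $|V(\alpha)\cap V(\beta)| \ge 2(s+1)-n$ and the parity check that the paper leaves implicit. No gap.
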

\begin{proof}
	For $s \geq \Big \lfloor \frac{n}{2} \Big \rfloor$, intersection of any two $(s+1)$-sequences of vertices is non-empty. Hence, the resulting graph is a complete graph.
\end{proof}

The converse of this result need not be true. A counterexample for this can be seen in Figure \ref{fig:Example}.

One can observe that the graph $K_{1}$ is not realisable as an ssi-graph of any graph $G$. Suppose that there exists a graph $G$ for which $A_{s}(G) \cong K_{1}$. This implies that there exists only one $s$-arc, say $\alpha_{i}$, in $G$ which can be shunted onto some other $s$-arc $\alpha_{j}$ in $G$. But, by reversing the order of the vertices in $\alpha_{j}$, we get another $s$-arc in $G$ with the same vertex set as $\alpha_{j}$, that is a vertex of $A_{s}(G)$, implying that there exists two pairs of $s$-arcs in $G$, $(\alpha_{i},\alpha_{j})$ and $(\alpha_{j}',\alpha_{i}')$, where the first $s$-arc in each pair can be shunted onto the other $s$-arc in the respective pair and not vice-versa, which contradicts our assumption. 

\begin{theorem} \label{Asacyclic1}
	The only acyclic ssi-graph is $K_{2}$.
\end{theorem}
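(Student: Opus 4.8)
The plan is to prove the sharper statement that a \emph{connected} ssi-graph with at least three vertices always contains a triangle. Since $A_s(G)$ is connected under the standing convention and is never $K_1$ (as observed just before the statement), this forces an acyclic $A_s(G)$ to have exactly two vertices, i.e. to be $K_2$ -- which is indeed realised, for instance $A_s(P_{s+2})\cong K_2$. So I would assume, for contradiction, that $A_s(G)$ is connected, acyclic, and has at least three vertices, and then exhibit a triangle.

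The engine is a canonical ``partner'' of a vertex. Given a vertex-arc $\alpha=v_0v_1\cdots v_s$, pick a right-extension $v_{s+1}$ (which exists because $\alpha$ is shuntable), so that $v_0v_1\cdots v_{s+1}$ is an $(s+1)$-arc on distinct vertices. Reversing it shows, exactly as in the proof of Theorem \ref{Asconn1}, that $\beta:=v_{s+1}v_s\cdots v_1$ is again a vertex of $A_s(G)$, and $\alpha\sim\beta$ since $V(\alpha)\cap V(\beta)=\{v_1,\dots,v_s\}\neq\emptyset$. The decisive feature of this partner is the controlled overlap of the two vertex sets: $v_0$ is the only vertex of $\alpha$ outside $\beta$, and $v_{s+1}$ the only vertex of $\beta$ outside $\alpha$. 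Because $A_s(G)$ is connected with at least three vertices, the edge $\alpha\beta$ cannot be the whole graph, so one of its ends has a further neighbour; by the symmetry between $\alpha$ and $\beta$ (each is the canonical partner of the other) I may assume there is a vertex $\gamma\neq\beta$ with $\gamma\sim\alpha$.

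The heart of the argument is to upgrade $\alpha$ to a \emph{doubly extendable} arc. Since $A_s(G)$ is acyclic it has no triangle, so $\gamma\not\sim\beta$. Reading this through the controlled overlap, $\gamma$ meets $V(\alpha)=\{v_0,\dots,v_s\}$ but misses $V(\beta)\supseteq\{v_1,\dots,v_s\}$, which pins down $V(\gamma)\cap V(\alpha)=\{v_0\}$. As $\gamma$ is an $s$-arc containing $v_0$, the vertex $v_0$ has a neighbour $u\in V(\gamma)$ lying on $\gamma$, and $u\notin V(\alpha)$ because $V(\gamma)\cap V(\alpha)=\{v_0\}$. Thus $v_0$ admits an extension outside $\alpha$ on its left end while $v_s$ already admits one on its right end; hence the reversed arc $\alpha'=v_sv_{s-1}\cdots v_0$ is also a vertex of $A_s(G)$, and crucially $V(\alpha')=V(\alpha)$.

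Finally I would assemble the triangle $\{\alpha,\alpha',\gamma\}$: $\alpha\sim\alpha'$ because they share the whole vertex set, while $\gamma\sim\alpha$ together with $V(\alpha')=V(\alpha)$ gives $\gamma\sim\alpha'$; the three are pairwise distinct since $\alpha\neq\alpha'$ (reversal, as $s\ge1$ and the vertices are distinct) and $\gamma$ contains $u\notin V(\alpha)=V(\alpha')$. This triangle contradicts acyclicity, completing the case; the symmetric case, where instead $\beta$ carries the extra neighbour, is identical after swapping the roles of $\alpha$ and $\beta$ and using that $v_{s+1}$ is the vertex exclusive to $\beta$. The step I expect to be the real obstacle is the deduction $V(\gamma)\cap V(\alpha)=\{v_0\}$ and the resulting left-extendability of $\alpha$: this is the only place where triangle-freeness and the precise single-vertex overlap of the canonical partner $\beta$ are both used, and getting that bookkeeping right -- including the distinctness checks -- is exactly what makes the triangle appear.
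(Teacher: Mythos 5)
Your proposal is correct, and it is a genuinely tighter organization of the argument than the one in the paper. Both proofs reduce the theorem to showing that an ssi-graph on at least three vertices contains a $C_{3}$, and both ultimately produce one of the same two triangles: either the third arc meets the shared part of the adjacent pair, or it certifies that the reversal $\alpha'$ is itself a vertex, giving the triangle $\{\alpha,\alpha',\gamma\}$. The difference is in how you get there. The paper argues directly, by a case analysis on which $s$-arc the third vertex $\alpha_{r}$ can be shunted onto ($\alpha_{i}$, $\alpha_{i}'$, $\alpha_{j}$, $\alpha_{j}'$, or something else) and then on which vertex of $\alpha_{i}$ it shares, and some of those cases are argued rather loosely. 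You instead argue by contradiction and use triangle-freeness itself as a structural tool: since $\gamma\not\sim\beta$, the controlled single-vertex overlap of the canonical partner $\beta=v_{s+1}v_{s}\cdots v_{1}$ forces $V(\gamma)\cap V(\alpha)=\{v_{0}\}$, which immediately hands you a left-extension $u$ of $\alpha$ outside $V(\alpha)$ and hence the vertex $\alpha'$. This collapses the paper's case distinctions into a single deduction and makes the distinctness checks ($\alpha\neq\alpha'$ because $s\geq 1$; $\gamma\neq\alpha,\alpha'$ because $u\notin V(\alpha)$) explicit, which the paper leaves implicit. Your appeal to the standing connectedness convention (so that a third vertex must attach to $\alpha$ or $\beta$, and the two roles are symmetric because each is the canonical partner of the other) is also sound, as is the realisability check $A_{s}(P_{s+2})\cong K_{2}$. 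What the paper's direct version buys is a standalone statement (``every ssi-graph on three or more vertices contains an induced $C_{3}$'') that it later reuses in Proposition \ref{Asgirth} to get girth $3$; your contradiction-based version proves the same fact but would need to be phrased non-contrapositively to serve that purpose.
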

\begin{proof}
	Let $S$ be the set of all $s$-arcs in $G$ that are vertices of $A_{s}(G)$. We will prove that if the ssi-graph of a graph $G$ has more than two vertices, then $A_{s}(G)$ must contain an induced $C_{3}$. For every $s$-arc $\alpha_{i}$ in $S$, there exists another $s$-arc $\alpha_{j}$ in $G$, such that $\alpha_{i}$ can be shunted onto $\alpha_{j}$. The $s$-arc obtained by reversing the order of the vertices of $\alpha_{j}$, denoted by $\alpha_{j}'$, can be shunted onto $\alpha_{i}'$ and hence $\alpha_{i}\sim \alpha_{j}'$ in $A_{s}(G)$. Since there exists at least one more $s$-arc in $G$, say $\alpha_{k}$, which is a vertex of $A_{s}(G)$, by Theorem \ref{Asconn1}, there exists a path in $G$, between the vertices corresponding to the $(s+1)$-arc $\alpha_{i} \cup \alpha_{j}'$ and the $s$-arc $\alpha_{k}$ of $A_{s}(G)$, in $G$, leading to the following cases.
	
	There exists a third vertex $\alpha_{r} \sim \alpha_{i}$ (or $\alpha_{r} \sim \alpha_{j}'$) such that $\alpha_{r}$ can be shunted onto either $\alpha_{i}$, $\alpha_{i}'$, $\alpha_{j}$, $\alpha_{j}'$ or some other $s$-arc in $G$. We only consider the case when $\alpha_{r} \sim \alpha_{i}$ since the same argument applies for the case when $\alpha_{r} \sim \alpha_{j}'$. In the first case, the union of the $s$-arcs $\alpha_{r}$, $\alpha_{i}$ and $\alpha_{j}$ form an $(s+2)$-arc in $G$. This implies that $\alpha_{i}'$ is a vertex of $A_{s}(G)$ and hence $\alpha_{i} \sim \alpha_{r}$, $\alpha_{i} \sim \alpha_{i}'$ and $\alpha_{i}' \sim \alpha_{r}$, inducing a $C_{3}$ in $A_{s}(G)$. A similar argument applies to the case when $\alpha_{r}$ can be shunted onto $\alpha_{j}'$. For the case where $\alpha_{r}$ can be shunted onto $\alpha_{i}'$ (or $\alpha_{j}$), it follows that both $\alpha_{j}'$ (or $\alpha_{i}$) and $\alpha_{r}$ have the same head. Hence, the intersection between the vertices of the $s$-arcs $\alpha_{i}$, $\alpha_{j}'$ and $\alpha_{r}$ is always non-empty. In the latter case, there exists another $s$-arc $\alpha_{p}$ such that $\alpha_{r}$ can be shunted onto. If the left end-vertex of $\alpha_{r}$, say $v_{i}$, is the only common vertex of $\alpha_{i}$ and $\alpha_{r}$, then the degree of $v_{i}$ is at least $2$ where exactly one vertex of $\alpha_{i}$ belongs to $N(v_{i})$. As a result of this adjacency, $\alpha_{r}'$ can be shunted onto another $s$-arc in $G$ which proves the existence of an induced $C_{3}$ in $G$. If any other vertex of the $s$-arc $\alpha_{r}$ is the only common vertex of $\alpha_{i}$ and $\alpha_{r}$, then $\alpha_{i} \sim \alpha_{p}$ inducing a $C_{3}$ in $G$. It is also obvious that $\alpha_{i} \sim \alpha_{p}$ if the number of common vertices in $\alpha_{i}$ and $\alpha_{r}$ is greater than $1$. Hence, the ssi-graph of $G$ will always contain an induced $C_{3}$.

	Since for any ssi-graph on at least three vertices, a cycle on three vertices is an induced subgraph of $A_{s}(G)$, we can conclude that the only acyclic ssi-graph is isomorphic to $K_{2}$.
\end{proof}

\begin{lemma} \label{As*}
	The $(s^{\ast}-1)$-shunt intersection graph of $G$ is a complete graph.
\end{lemma}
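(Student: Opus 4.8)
The plan is to first translate the statement into a purely combinatorial claim about near-longest paths, and then rule out disjointness by exhibiting an impossibly long path. Recall that $s^{\ast}$ is the length of a longest path in $G$, so an $s^{\ast}$-arc on distinct vertices is exactly a longest path: it has $s^{\ast}+1$ vertices and nothing longer exists. By Definition \ref{Defn1}, a vertex of $A_{s^{\ast}-1}(G)$ is an $(s^{\ast}-1)$-arc $\alpha$ (a path on $s^{\ast}$ distinct vertices) that can be shunted onto another such arc, i.e. that extends by one vertex at its right end to an $s^{\ast}$-arc on distinct vertices. Hence every vertex of $A_{s^{\ast}-1}(G)$ is precisely a longest path of $G$ with one end-vertex deleted; in particular such arcs exist (take the tail of any longest path), so the graph is non-empty. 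Since completeness means that every two distinct vertices are adjacent, and adjacency is intersection in at least one vertex, it suffices to show that any two vertices $\alpha$ and $\beta$ of $A_{s^{\ast}-1}(G)$ share a vertex of $G$.

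By the standing assumption preceding this lemma, all these arcs lie in the single component $H$ of $G$ whose detour diameter equals $s^{\ast}$, and $H$ is connected. Suppose, for contradiction, that two vertices $\alpha=a_1a_2\cdots a_{s^{\ast}}$ and $\beta=b_1b_2\cdots b_{s^{\ast}}$ are vertex-disjoint. I would extend $\alpha$ to a longest path $P_\alpha=a_1\cdots a_{s^{\ast}}a_{s^{\ast}+1}$ of $H$ and split into two cases according to whether $\beta$ meets $P_\alpha$. Because $\alpha\cap\beta=\emptyset$, the only vertex of $P_\alpha$ that $\beta$ can contain is the extra end $a_{s^{\ast}+1}$.

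In the first case $a_{s^{\ast}+1}=b_j$ for some $j$. Then $a_{s^{\ast}}\sim b_j$, and concatenating all of $\alpha$ with the longer of the two arms of $\beta$ emanating from $b_j$ yields a simple path of length $s^{\ast}+\max(j-1,\,s^{\ast}-j)\ge s^{\ast}+\lceil (s^{\ast}-1)/2\rceil> s^{\ast}$ for $s^{\ast}\ge 2$. In the second case $\beta$ is disjoint from all of $P_\alpha$; using connectedness of $H$ I pick a shortest $H$-path $R$ from some $b_j\in\beta$ to some $a_k\in P_\alpha$, with interior avoiding $P_\alpha\cup\beta$ and length $r\ge 1$. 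Splicing the longer arm of $P_\alpha$ at $a_k$, then $R$, then the longer arm of $\beta$ at $b_j$, produces a simple path of length at least $\lceil s^{\ast}/2\rceil+r+\lceil (s^{\ast}-1)/2\rceil\ge s^{\ast}+1> s^{\ast}$, using the identity $\lceil s^{\ast}/2\rceil+\lceil (s^{\ast}-1)/2\rceil=s^{\ast}$. Either way we obtain a path strictly longer than $s^{\ast}$, contradicting that $s^{\ast}$ is the detour diameter of $H$. Hence $\alpha$ and $\beta$ must intersect, so $A_{s^{\ast}-1}(G)$ is complete.

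The main obstacle, and the step I would take most care over, is the second case: one must guarantee that the spliced walk is a genuine simple path, which is exactly why $R$ is chosen shortest with interior disjoint from $P_\alpha\cup\beta$ and why the two arms are drawn from disjoint vertex sets, and one must verify the ceiling estimate $\lceil s^{\ast}/2\rceil+\lceil (s^{\ast}-1)/2\rceil=s^{\ast}$ so that merely inserting the single connecting edge already pushes the length past $s^{\ast}$. The restriction to a single component of detour diameter $s^{\ast}$ is essential, since otherwise two disjoint near-longest paths in different components would give non-adjacent vertices; this hypothesis is what makes the connectivity argument available and lets us avoid any appeal to the classical fact that two longest paths of a connected graph must share a vertex.
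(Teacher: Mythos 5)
Your proof is correct and follows the same core strategy as the paper's: assume two vertices of $A_{s^{\ast}-1}(G)$ are vertex-disjoint and use connectivity to splice them into a path of length exceeding $s^{\ast}$, contradicting the definition of the detour diameter. The difference is one of rigour rather than of route: the paper's argument asserts that two disjoint $(s^{\ast}-1)$-arcs would be joined by a single edge and that the resulting union contains an arc of length greater than $s^{\ast}$, without justifying the existence of such an edge (connectivity only guarantees a path) or the length count when the connection lands at interior vertices of the arcs. Your version supplies exactly these missing details --- extending $\alpha$ to a genuine longest path $P_{\alpha}$, taking a shortest connecting path $R$ with interior disjoint from both arcs, and doing the arm-length bookkeeping via $\lceil s^{\ast}/2\rceil+\lceil (s^{\ast}-1)/2\rceil=s^{\ast}$ --- so it is a tightened version of the same argument rather than a genuinely different one.
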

\begin{proof}
	In view of the definition of $s^{\ast}$-arc of a graph, two distinct $s^{\ast}$-arcs of $G$ will share at least one common vertex in $G$ such that an $(s^{\ast}+1)$-arc is not contained in $G$. Other $s^{\ast}$-arcs, if any, can be obtained from the adjacency of an $s^{\ast}$-arc and an $s$-arc where $1 \leq s \leq s^{\ast}$. It can be noted that there exists no $l$-arc in the union of any of these $s^{\ast}$-arcs; with $l \geq s^{\ast}+1$. As a consequence of this, these $s^{\ast}$-arcs cannot be shunted onto any other $s^{\ast}$-arc and hence $A_{s^{\ast}}(G)$ is an empty graph. For each $s^{\ast}$-arc in $G$, there exists two $(s^{\ast}-1)$-arcs such that one can be shunted onto the other. 
	
	Between any two $(s^{\ast}-1)$-arcs in $G$, there exists at least one vertex that is common to the corresponding $(s^{\ast}-1)$-arcs in $G$. If $G$ contains two $(s^{\ast}-1)$-arcs that have no vertex in common, since $G$ is connected, there exists at least one edge that is adjacent to one vertex in both the $(s^{\ast}-1)$-arcs implying that an $l$-arc of length greater than $s^{\ast}$ is induced in $G$. Hence, the set of all these $(s^{\ast}-1)$-arcs will contain a vertex at which these $s^{\ast}$-arcs intersect in $G$. Therefore, the $(s^{\ast}-1)$-shunt intersection graph of $G$ is a complete graph.
\end{proof}

\begin{theorem} \label{Asacyclic2}
	The ssi-graph of a graph $G$ is acyclic if and only if there exists a unique $s^{\ast}$-arc in $G$. 
\end{theorem}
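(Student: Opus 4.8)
The plan is to combine the dichotomy of Theorem \ref{Asacyclic1} with a vertex count of the relevant ssi-graph. By Theorem \ref{Asacyclic1}, a non-empty ssi-graph is acyclic if and only if it is isomorphic to $K_2$, so the statement reduces to showing that the (unique) ssi-graph which can be acyclic has exactly two vertices precisely when $G$ has a unique $s^*$-arc. Here, since an arc and its reverse share the same vertex set, ``unique'' is to be read up to reversal, i.e.\ the longest path of $G$ is unique as a subgraph. Throughout I identify the vertices of $A_s(G)$ with the shuntable $s$-arcs and use the key observation that an $s$-arc is shuntable exactly when it is the tail of some $(s+1)$-arc on distinct vertices.

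First I would pin down the value of $s$ at which acyclicity is possible. If $s \ge s^*$, then either no $s$-arc exists or, as noted in the proof of Lemma \ref{As*}, no $s^*$-arc can be shunted, so $A_s(G)$ is empty. If $s \le s^*-2$, I fix a longest path $P = u_0 u_1 \cdots u_{s^*}$ and consider its forward length-$s$ subarcs $\sigma_i = u_i \cdots u_{i+s}$ and its backward subarcs $\tau_j = u_{s^*-j}\cdots u_{s^*-j-s}$. Each $\sigma_i$ with $i \le s^*-s-1$ is shuntable onto $\sigma_{i+1}$ within $P$, and similarly for the $\tau_j$; since $s^*-s \ge 2$ this yields at least two distinct $\sigma$'s and two distinct $\tau$'s, and a parity argument on the index direction shows no $\sigma_i$ can equal any $\tau_j$. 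Hence $A_s(G)$ has at least four vertices and is not $K_2$. Therefore acyclicity forces $s = s^*-1$, and by Lemma \ref{As*} the graph $A_{s^*-1}(G)$ is complete, so it is acyclic precisely when it has exactly two vertices.

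For the remaining equivalence I use that the shuntable $(s^*-1)$-arcs are exactly the tails of the $s^*$-arcs, i.e.\ the initial $(s^*-1)$-subarcs of the longest paths of $G$. If the longest path is unique, say $P = u_0\cdots u_{s^*}$, then the only $s^*$-arcs are $P$ and its reverse $P'$, whose tails $t(P)=u_0\cdots u_{s^*-1}$ and $t(P')=u_{s^*}\cdots u_1$ are two distinct arcs sharing $u_1,\ldots,u_{s^*-1}$; thus $A_{s^*-1}(G)$ has exactly two adjacent vertices and is $K_2$. Conversely, suppose $A_{s^*-1}(G)=K_2$ but $G$ had two distinct undirected longest paths $P \ne Q$, giving four distinct directed $s^*$-arcs $P,P',Q,Q'$. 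I would show their tails comprise at least three distinct arcs: since $t(P)\ne t(P')$ always, if the total were only two then $\{t(Q),t(Q')\}=\{t(P),t(P')\}$, and matching the first $s^*$ vertices in each orientation forces either $Q=P$ or $Q=P'$, contradicting $P\ne Q$. Hence $A_{s^*-1}(G)$ would have at least three vertices, contradicting $K_2$, so the longest path is unique.

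The main obstacle is this last converse step: ruling out that two genuinely different longest paths can collapse to only two tail-arcs. The delicate point is that two longest paths may share a long common segment, so the comparison must be made between directed initial segments (tails) rather than vertex sets, and must exploit the reversal symmetry to conclude that agreement of tails in both orientations forces the two paths to coincide (possibly after reversal). A minor separate check is the degenerate regime $s^*\le 1$, where $s=s^*-1$ is not an admissible value $s\ge 1$; I would dispose of it by observing that such graphs carry no shuntable arcs and no relevant longest path.
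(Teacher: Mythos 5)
Your proof follows the same route as the paper's: reduce via Theorem \ref{Asacyclic1} to deciding when the ssi-graph is $K_{2}$, invoke Lemma \ref{As*} to see that $A_{s^{\ast}-1}(G)$ is complete, and count the shuntable $(s^{\ast}-1)$-arcs as the tails of the directed longest paths. Your version is correct and in fact tightens two steps the paper leaves implicit --- the argument that acyclicity can only occur at $s=s^{\ast}-1$ (via the four distinct forward/backward subarcs when $s\le s^{\ast}-2$), and the check that two distinct longest paths must produce at least three distinct tails --- so there is nothing to correct.
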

\begin{proof}	
	In view of Theorem \ref{Asacyclic1}, $K_{2}$ is the only acyclic ssi-graph. Suppose that the ssi-graph of a graph $G$ is $K_{2}$. This implies that there exists only two $s$-arcs in $G$ which are vertices of $A_{s}(G)$. If $\alpha_{i}$ is a vertex of $A_{s}(G)$ that can be shunted onto $\alpha_{j}$, then the second vertex of $A_{s}(G)$ is $\alpha_{j}'$. %Let $\alpha_{i}$ and $\alpha_{j}$ be the two $s$-arcs in $G$ that are vertices of $A_{s}(G)$. Since $\alpha_{i}$ and $\alpha_{j}$ are the only vertices of $A_{s}(G)$, this implies that $\alpha_{i}$ can only be shunted onto $\alpha_{j}'$ and $\alpha_{j}$ can only be shunted onto $\alpha_{i}'$. 
	We will show that $\alpha_{i} \cup \alpha_{j}$ must be the unique $s^{\ast}$-arc in $G$. 
	
	Suppose that there exists more than one $s^{\ast}$-arc in $G$. By Lemma \ref{As*}, the $(s^{\ast}-1)$-shunt intersection graph of $G$ is a complete graph. 
	Since each $s^{\ast}$-arc in $G$ contains two pairs of $(s^{\ast}-1)$-arcs such that only one can be shunted onto the other in each pair, $G$ must contain a unique $s^{\ast}$-arc which is $\alpha_{i} \cup \alpha_{j}$. If no additional $s^{\ast}$-arc is spanned in $G$, $G$ many have other $(s^{\ast}-1)$-arcs with one end vertex that is not a vertex of the unique $s^{\ast}$-arc of $G$. However, these $(s^{\ast}-1)$-arcs are not vertices of $A_{s^{\ast}-1}(G)$ because of the existence of a unique $s^{\ast}$-arc in $G$ and the shunting property. Since $K_{1}$ and $K_{2}$ are the only acyclic complete graphs, the result follows.
	
	Conversely, if there exists a unique $s^{\ast}$-arc in $G$, then there are two $(s^{\ast}-1)$-arcs which are subgraphs of the $s^{\ast}$-arc that are vertices of $A_{s^{\ast}-1}(G)$. If there exist $(s^{\ast}-1)$-arcs other than these in $G$, then for each of these $(s^{\ast}-1)$-arcs, there exists no other $(s^{\ast}-1)$-arc such that one can be shunted onto the other. Hence, any $(s^{\ast}-1)$-arc other than the two $(s^{\ast}-1)$-arcs in the unique $s^{\ast}$-arc are not vertices of the $(s^{\ast}-1)$si-graph of $G$. Therefore, $A_{s^{\ast}-1}(G) \cong K_{2}$. 
\end{proof}

\begin{proposition} \label{Asgirth}
	If $G$ is a graph which has more than one $s^{\ast}$-arc, then the corresponding ssi-graph is of girth $3$.
\end{proposition}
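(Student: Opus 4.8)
The plan is to obtain this statement as a corollary of Theorems \ref{Asacyclic1} and \ref{Asacyclic2}, since the real combinatorial content---that a sufficiently large ssi-graph must carry a triangle---was already extracted in the proof of Theorem \ref{Asacyclic1}. The first thing to record is that the girth of a simple graph is the length of its shortest cycle, so to certify girth $3$ it is enough to produce a single $C_3$: no cycle can be shorter. Thus the target reduces to showing that the corresponding ssi-graph is neither acyclic nor triangle-free.

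First I would apply the contrapositive of Theorem \ref{Asacyclic2}. Since $G$ has more than one $s^{\ast}$-arc, the hypothesis ``unique $s^{\ast}$-arc'' fails, so the ssi-graph cannot be acyclic. By Theorem \ref{Asacyclic1} the only acyclic ssi-graph is $K_2$, and it was noted earlier in the exposition that $K_1$ is not realisable as an ssi-graph; hence the ssi-graph has at least three vertices. The one point deserving a short remark here is that a two-vertex ssi-graph is forced to be $K_2$: as seen in the proof of Theorem \ref{Asacyclic1}, for every vertex $\alpha_i$ there is an $s$-arc $\alpha_j$ onto which it can be shunted, whence $\alpha_i \sim \alpha_j'$ in the ssi-graph, so no vertex is isolated and any ssi-graph on exactly two vertices is adjacent, i.e.\ $K_2$.

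Next I would appeal directly to the argument already carried out inside the proof of Theorem \ref{Asacyclic1}, which establishes that every ssi-graph on at least three vertices contains an induced $C_3$. Applying it to our graph yields a triangle, and since a triangle is a shortest possible cycle, the girth equals $3$. A cleaner, self-contained alternative for the critical case is to invoke Lemma \ref{As*}: more than one $s^{\ast}$-arc produces more than two $(s^{\ast}-1)$-arcs that are vertices of $A_{s^{\ast}-1}(G)$, and Lemma \ref{As*} makes $A_{s^{\ast}-1}(G)$ complete, so it is a complete graph on at least three vertices and hence has girth $3$ outright. The only mildly delicate step---and therefore the main obstacle---is the bookkeeping that guarantees at least three vertices in the relevant ssi-graph and that rules out the degenerate cases $K_1$ and $K_2$; once this is secured, either route closes the argument immediately.
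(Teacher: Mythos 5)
Your proposal follows essentially the same route as the paper: apply Theorem \ref{Asacyclic2} (in contrapositive form) to rule out acyclicity, then invoke the induced-$C_3$ argument from the proof of Theorem \ref{Asacyclic1} to conclude that the girth is $3$. The additional bookkeeping ruling out $K_1$ and $K_2$, and the alternative closing step via Lemma \ref{As*}, are correct but go slightly beyond what the paper's own (terser) proof records.
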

\begin{proof}
	As a consequence of Theorem \ref{Asacyclic2}, the ssi-graph of $G$ is never acyclic. As explained in the proof of Theorem \ref{Asacyclic1}, a $C_{3}$ is an induced subgraph of the ssi-graph of $G$. Hence, the girth of $A_{s}(G)$ is $3$.
\end{proof}

\subsection{$1$si and $2$si-Graphs of a Graph}

The order, size and degree of the $1$si-graph and the $2$si-graph of a graph $G$ are discussed in the following results. Further properties of the $1$si-graph of a graph $G$ are also studied.

\begin{proposition} \label{Asnumber1}
	The order of the $1$si-graph of a connected graph $G$ on at least three vertices is $2m-m_{1}$ where $m_{1}$ is the number of pendant edges.
\end{proposition}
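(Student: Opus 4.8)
The plan is to set up a correspondence between the vertices of $A_1(G)$ and a suitable subset of the oriented edges of $G$, and then to count that subset. First I would observe that since we admit only $1$-arcs on distinct vertices, a $1$-arc is precisely an ordered pair $(v_0,v_1)$ with $v_0\sim v_1$ and $v_0\neq v_1$; that is, an orientation of an edge. Each of the $m$ edges thus yields exactly two $1$-arcs, giving $2m$ candidate vertices in total, so the task reduces to deciding which of these $2m$ arcs actually belong to $V(A_1(G))$.

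Next I would translate the shunting condition of Definition~\ref{Defn1} into a degree condition. For $s=1$, an arc $\alpha=(v_0,v_1)$ can be shunted onto some $\beta$ on a $2$-arc on distinct vertices exactly when there is a $2$-arc $\gamma=(v_0,v_1,w)$ with $tail(\gamma)=\alpha$, $head(\gamma)=\beta=(v_1,w)$, and $v_0,v_1,w$ pairwise distinct. Since $v_1\sim w$ is forced and the only genuine constraint is $w\neq v_0$, such a $w$ exists if and only if the right-end vertex $v_1$ has a neighbour other than $v_0$, i.e.\ if and only if $\deg(v_1)\geq 2$. Hence $(v_0,v_1)\in V(A_1(G))$ if and only if $v_1$ is not a pendant vertex.

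It then remains to count the oriented edges excluded by this criterion, namely those $(v_0,v_1)$ whose right-end vertex $v_1$ has degree $1$. Each pendant vertex $v_1$ has a unique neighbour $v_0$, so it is the right-end vertex of exactly one oriented edge; thus the number of excluded arcs equals the number of pendant vertices. Here I would invoke the hypotheses that $G$ is connected and has at least three vertices to guarantee that no edge is pendant at both ends (which would force $G\cong K_2$), so that each pendant edge carries exactly one pendant vertex and the number $m_1$ of pendant edges coincides with the number of pendant vertices. Subtracting the excluded arcs gives $|V(A_1(G))|=2m-m_1$.

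The only delicate point, and the step I would treat most carefully, is the bookkeeping in the last paragraph: one must verify that the assignment sending each pendant vertex to the unique oriented edge pointing into it is a bijection onto the set of excluded arcs, and that $m_1$ (counting pendant edges) equals the number of pendant vertices. This is precisely where connectivity and $n\geq 3$ enter, ruling out the degenerate $K_2$ in which a single edge would be counted as pendant from both orientations.
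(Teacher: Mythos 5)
Your proposal is correct and follows essentially the same route as the paper: both arguments classify the $2m$ oriented edges according to whether the right end-vertex is pendant, the paper by partitioning $E(G)$ into pendant and non-pendant edges and summing $2(m-m_1)+m_1$, you by complementary counting $2m-m_1$. Your version is slightly more explicit about the degree criterion for shuntability and about why connectivity and $n\geq 3$ make pendant edges correspond bijectively to pendant vertices, but the underlying idea is identical.
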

\begin{proof}
	Since the size of $G$ is $m$, we partition the edge set of $G$ into disjoint subsets $E_{1}=\{u_{i}u_{j}:\deg_{G}(u_{i})=1$ \text{or} $\deg_{G}(u_{j})=1\}$ and $E_{2}=E(G)-E_{1}$. Let $|E_{1}|=m_{1}$. For every edge $u_{i}u_{j} \in E_{2}$, the $1$-arcs $u_{i}u_{j}$ and $u_{j}u_{i}$ are vertices of $A_{1}(G)$ because they can be shunted onto some other $1$-arc in $G$. Therefore, the number of these vertices is $2(m-m_{1})$. On the other hand, if $v_{i}v_{j}$ is an edge with $\deg_{G}(v_{i})=1$ (or $\deg_{G}(v_{j})=1$), then $v_{i}v_{j}$ (or $v_{j}v_{i}$) is a vertex in $A_{1}(G)$; but $v_{j}v_{i}$ (or $v_{i}v_{j}$) will not be a vertex in $A_{1}(G)$ since $v_{j}v_{i}$ (or $v_{i}v_{j}$) cannot be shunted onto any other $1$-arc in $G$. Hence, the number of these $1$-arcs in $G$ is $m_{1}$. Therefore, the order of $A_{1}(G)$ is $2m-m_{1}$.
\end{proof}

\begin{corollary} \label{Asnumber1cor}
	The order of the $1$si-graph of a graph $G$ with $\delta > 1$ is $2m$.
\end{corollary}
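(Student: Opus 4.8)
The plan is to derive this directly from Proposition \ref{Asnumber1}. The key observation is that the hypothesis $\delta > 1$ means every vertex of $G$ has degree at least $2$, so $G$ contains no vertex of degree $1$. Consequently, there is no edge $u_iu_j$ with $\deg_G(u_i) = 1$ or $\deg_G(u_j) = 1$; that is, $G$ has no pendant edges. In the notation of Proposition \ref{Asnumber1}, this means the quantity $m_1$, which counts precisely the pendant edges, satisfies $m_1 = 0$.

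Before invoking the proposition, I would verify that its remaining hypotheses are compatible with $\delta > 1$. Since a simple graph on at most two vertices has maximum degree at most $1$, the condition $\delta > 1$ forces $G$ to have at least three vertices, with the smallest such graph being $C_3$; this confirms the ``at least three vertices'' requirement automatically. Taking $G$ to be connected as in Proposition \ref{Asnumber1}, I would then substitute $m_1 = 0$ into the order formula $2m - m_1$, which immediately yields that the order of $A_1(G)$ equals $2m$.

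I do not anticipate any genuine obstacle here, as the corollary is a direct specialization of Proposition \ref{Asnumber1}. The only point deserving brief attention is ensuring that the minimum-degree hypothesis does not conflict with the connectedness and minimum-order assumptions of the parent proposition, which the observation above settles.
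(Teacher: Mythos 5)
Your proposal is correct and follows exactly the paper's route: the paper likewise observes that $\delta>1$ forces $E_{1}=\emptyset$ (equivalently $m_{1}=0$) and then reads off the order $2m-m_{1}=2m$ from Proposition \ref{Asnumber1}. Your additional remark that $\delta>1$ automatically guarantees at least three vertices is a harmless bonus check, not a divergence.
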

\begin{proof}
	The result is immediate from the fact that $E_{1}=\emptyset$ and $E_{2}=E(G)$.
\end{proof}

Recall that a graph $G$ on $n$ vertices determines $n$ cliques in the line graph of $G$. The $1$si-graph of $G$ exhibits this same property with the exception of the order of the cliques. This characterisation can be used to determine the number of edges in $A_{1}(G)$. 

\begin{theorem}
	The size of the $1$si-graph of a graph $G$ of order and size $n$ and $m$ respectively is $\frac{1}{2}\sum\limits_{i=1}^{n-m_{1}} (2d_{i}-m_{i})^2-3m+\frac{5m_{1}}{2}$, where $d_{i}$ is the degree of an internal vertex $u_{i}\in V(G)$, $m_{i}$ is the number of neighbours of $u_{i}$ that are pendant vertices of $G$ and $m_{1}$ is the total number of pendant vertices of $G$.
\end{theorem}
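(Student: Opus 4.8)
The plan is to exploit the clique-cover structure that $A_{1}(G)$ inherits from $G$, exactly as one does for the line graph, but keeping careful track of orientations. First I would attach to each vertex $w$ of $G$ the set $C_{w}$ of all $1$-arcs that are vertices of $A_{1}(G)$ and whose vertex set contains $w$; since any two such $1$-arcs meet at $w$, each $C_{w}$ is a clique of $A_{1}(G)$. The first substantive step is to compute $|C_{u_{i}}|$ for an internal vertex $u_{i}$ of degree $d_{i}$. A $1$-arc using $u_{i}$ is either incoming, of the form $uu_{i}$ with $u\in N(u_{i})$, or outgoing, of the form $u_{i}u$. Every incoming arc $uu_{i}$ is a vertex of $A_{1}(G)$ because $u_{i}$ has degree at least $2$, so $uu_{i}$ can be shunted onto some $u_{i}x$ with $x\ne u$; this gives $d_{i}$ vertices. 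An outgoing arc $u_{i}u$ is a vertex precisely when $\deg(u)\ge 2$, i.e.\ when $u$ is internal, so exactly $d_{i}-m_{i}$ of the outgoing arcs survive. Hence $|C_{u_{i}}|=2d_{i}-m_{i}$, which already explains the term $(2d_{i}-m_{i})$ in the statement.

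Next I would dispose of the pendant vertices and pin down the double counting. If $w$ is a pendant vertex with neighbour $v$, then no incoming arc $uw$ survives (as $\deg(w)=1$), while the only outgoing arc $wv$ is a vertex since $\deg(v)\ge 2$; thus $C_{w}=\{wv\}$ is a singleton contributing no edge. In particular there is a unique $1$-arc through each pendant vertex, so two distinct $1$-arcs can meet only at an internal vertex, and every edge of $A_{1}(G)$ is recorded in at least one clique $C_{u_{i}}$ with $u_{i}$ internal. Summing $\binom{|C_{u_{i}}|}{2}=\binom{2d_{i}-m_{i}}{2}$ over the $n-m_{1}$ internal vertices therefore counts every edge of $A_{1}(G)$, but with a multiplicity equal to the number of common vertices of its two endpoints: an edge whose two $1$-arcs share a single (internal) vertex is counted once, while an edge whose two $1$-arcs share both of their vertices, necessarily a reversed pair $u_{i}u_{j},\,u_{j}u_{i}$ with $u_{i},u_{j}$ both internal, is counted twice.

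The heart of the argument is then a short double-counting identity. Writing $E$ for the size of $A_{1}(G)$ and $E_{2}$ for the number of reversed-pair edges, the multiplicity count gives $\sum_{i}\binom{2d_{i}-m_{i}}{2}=E+E_{2}$. A reversed pair $u_{i}u_{j},\,u_{j}u_{i}$ consists of two vertices of $A_{1}(G)$ exactly when both endpoints are internal, i.e.\ when $u_{i}u_{j}$ is an internal edge; since $G$ has $m_{1}$ pendant edges, there are $E_{2}=m-m_{1}$ such edges. Finally I would expand $\binom{2d_{i}-m_{i}}{2}$ and apply the two degree identities $\sum_{i}d_{i}=2m-m_{1}$ (the degree sum over internal vertices) and $\sum_{i}m_{i}=m_{1}$ (each pendant edge is counted once, at its internal endpoint), which give $\sum_{i}(2d_{i}-m_{i})=4m-3m_{1}$. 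Substituting everything yields
\[
E=\tfrac{1}{2}\sum_{i}(2d_{i}-m_{i})^{2}-2m+\tfrac{3m_{1}}{2}-(m-m_{1})=\tfrac{1}{2}\sum_{i}(2d_{i}-m_{i})^{2}-3m+\tfrac{5m_{1}}{2},
\]
as claimed.

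The main obstacle is the bookkeeping in the double-counting step: one must verify that shared vertices of two distinct $1$-arcs are always internal (so the pendant cliques neither create nor conceal an edge) and that only the reversed pairs, which correspond bijectively to internal edges, are over-counted. Once this multiplicity is isolated, the remainder is the routine algebra recorded above.
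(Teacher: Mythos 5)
Your proposal is correct and follows essentially the same route as the paper: cover the edges of $A_{1}(G)$ by the cliques of $1$-arcs through each internal vertex, observe that each such clique has order $2d_{i}-m_{i}$, subtract the $m-m_{1}$ doubly-counted edges coming from reversed pairs $u_{i}u_{j},u_{j}u_{i}$ on internal edges, and finish with the degree-sum identities. Your write-up is in fact more careful than the paper's, since you explicitly verify the clique sizes, that pendant vertices contribute only singleton cliques, and that the only over-counted edges are the reversed pairs.
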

\begin{proof}
	Since the edges incident to any internal vertex of $G$ induce a clique of order $2d_{i}-m_{i}$ in $A_{1}(G)$, where $m_{i}$ is the number of neighbours of $u_{i}$ that are pendant vertices, the sum of the size of each of these cliques gives the size of $A_{1}(G)$. The $K_{2}$ induced by every edge in $G$ whose end-vertices are internal vertices is common to two cliques in $A_{1}(G)$. Hence, 
	\begin{equation*}
		\begin{split}
			|E(A_{1}(G))| &= \sum\limits_{i=1}^{n-m_{1}} \frac{(2d_{i}-m_{i})(2d_{i}-m_{i}-1)}{2}-(m-m_{1}) \\
			&=\sum\limits_{i=1}^{n-m_{1}} \frac{(2d_{i}-m_{i})^2}{2}-\sum\limits_{i=1}^{n-m_{1}} \Big (\frac{2d_{i}}{2}-\frac{m_{i}}{2} \Big) -m+m_{1}\\
			&=\sum\limits_{i=1}^{n-m_{1}} \frac{(2d_{i}-m_{i})^2}{2}-\sum\limits_{i=1}^{n-m_{1}} d_{i}+\sum\limits_{i=1}^{n-m_{1}}\frac{m_{i}}{2} -m+m_{1}\\
			&=\frac{1}{2}\sum\limits_{i=1}^{n-m_{1}} (2d_{i}-m_{i})^2-(2m-m_{1})+\frac{m_{1}}{2} -m+m_{1}\\
			&=\frac{1}{2}\sum\limits_{i=1}^{n-m_{1}} (2d_{i}-m_{i})^2-3m+\frac{5m_{1}}{2}.
		\end{split}
	\end{equation*} 
\end{proof}

\begin{theorem} \label{A1deg1}
	The degree of a vertex $uv$ in the $1$si-graph of a graph $G$ is 
	\begin{equation*}
		\deg_{A_{1}(G)}(uv)=
		\begin{cases}
			2(\deg_{G}(u)+\deg_{G}(v)-2)-\\(m_{u}+m_{v})+1, & \mbox{if} \ {\deg_{G}(u)>1} \ {and} \ {\deg_{G}(v)>1},\\
			2(\deg_{G}(v)-1)-m_{v}, & \mbox{if} \ {\deg_{G}(u)=1},\\
		\end{cases}
	\end{equation*}
	where $m_{u}$ and $m_{v}$ are the number of pendant neighbours of $u$ and $v$ respectively, other than $v$ and $u$, in $G$.
\end{theorem}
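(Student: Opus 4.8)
The plan is to compute the degree of a fixed vertex $uv$ of $A_{1}(G)$ directly by counting its neighbours. Since adjacency in $A_{1}(G)$ means sharing at least one vertex of $G$, and a $1$-arc carries exactly the two vertices appearing in it, the neighbours of $uv$ are precisely those $1$-arcs, distinct from $uv$, that are vertices of $A_{1}(G)$ and contain $u$ or $v$. So I would introduce the sets $S_{u}$ and $S_{v}$ of vertices of $A_{1}(G)$ containing $u$ and $v$ respectively, and evaluate $\deg_{A_{1}(G)}(uv)=|S_{u}\cup S_{v}|-1$ by inclusion--exclusion. The one preliminary fact I would state explicitly is the $s=1$ shunting criterion: a $1$-arc $xy$ is a vertex of $A_{1}(G)$ exactly when its right-end vertex satisfies $\deg_{G}(y)\ge 2$, since shunting $xy$ requires $y$ to have a neighbour other than $x$.

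With this criterion in hand, the arcs containing $u$ split into outgoing arcs $uw$ with $w\in N(u)$, of which $\deg_{G}(u)-p_{u}$ are vertices of $A_{1}(G)$ (where $p_{u}$ is the total number of pendant neighbours of $u$, these being exactly the excluded right-ends), and incoming arcs $wu$ with $w\in N(u)$, all $\deg_{G}(u)$ of which are vertices provided $\deg_{G}(u)\ge 2$; the same analysis applies at $v$. Observing that the two orientations $uv$ and $vu$ are precisely the arcs lying in both $S_{u}$ and $S_{v}$, I would then distinguish two cases. In Case 1 ($\deg_{G}(u),\deg_{G}(v)>1$) both $uv$ and $vu$ are vertices, so $|S_{u}|=2\deg_{G}(u)-p_{u}$, $|S_{v}|=2\deg_{G}(v)-p_{v}$, and $|S_{u}\cap S_{v}|=2$, yielding $\deg_{A_{1}(G)}(uv)=2\deg_{G}(u)+2\deg_{G}(v)-p_{u}-p_{v}-3$. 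In Case 2 ($\deg_{G}(u)=1$) the only arcs at $u$ are $uv$ (a vertex) and $vu$ (not a vertex, as $\deg_{G}(u)=1$), so $|S_{u}|=1$ and $|S_{u}\cap S_{v}|=1$, which together with $|S_{v}|=2\deg_{G}(v)-p_{v}$ gives $\deg_{A_{1}(G)}(uv)=2\deg_{G}(v)-p_{v}-1$.

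The main obstacle, and essentially the only delicate point, is reconciling the total pendant-neighbour counts $p_{u},p_{v}$ that arise naturally from the counting with the quantities $m_{u},m_{v}$ of the statement, which count pendant neighbours \emph{other than the opposite endpoint}. In Case 1 this is harmless, since neither $u$ nor $v$ is pendant, so $p_{u}=m_{u}$ and $p_{v}=m_{v}$, and the raw count rearranges to $2(\deg_{G}(u)+\deg_{G}(v)-2)-(m_{u}+m_{v})+1$. In Case 2, however, $u$ itself is a pendant neighbour of $v$, so $p_{v}=m_{v}+1$; substituting this single offset into $2\deg_{G}(v)-p_{v}-1$ produces $2(\deg_{G}(v)-1)-m_{v}$, matching the claim. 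I would therefore take care to track this ``$+1$'' correction, together with the subtraction of the arc $uv$ itself after the inclusion--exclusion step, as these two bookkeeping adjustments are exactly what convert the counts into the asserted closed forms.
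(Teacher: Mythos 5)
Your proposal is correct and follows essentially the same route as the paper: both count the $1$-arcs through $u$ or $v$ that survive the shunting criterion (an arc $xy$ is a vertex of $A_{1}(G)$ exactly when $\deg_{G}(y)\ge 2$), splitting the neighbours of each endpoint into pendant and internal ones. The only difference is organizational --- you run an inclusion--exclusion on $S_{u}$ and $S_{v}$ and then correct for the arc $uv$ itself and for the $p_{v}=m_{v}+1$ offset, whereas the paper directly excludes the opposite endpoint from each count and adds $1$ for $vu$; the resulting arithmetic is identical.
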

\begin{proof}
	Every $1$-arc $uv$ in $G$ is adjacent to the $1$-arcs with either $u$ or either $v$ as one of its end-vertices. If there exists a pendant vertex $w$ of $G$ in the neighbourhood of $u$, then $wu$ or $wu$ is a vertex in $A_{1}(G)$ that is adjacent to $uv$ in $A_{1}(G)$. However, for those internal vertices $x$ of $G$ in the neighbourhood of $u$ of $G$, since both $xu$ and $ux$ are vertices of $A_{1}(G)$, both of these vertices are neighbours of $uv$ in $A_{1}(G)$. Hence, if both $uv$ and $vu$ are vertices of $A_{1}(G)$, then the number of $1$-arcs in $G$ with $u$ as an end-vertex is $2(\deg_{G}(u)-1-m_{u})+m_{u}$. Therefore, $\deg_{A_{1}(G)}(uv)=2(\deg_{G}(u)-1)-m_{u} + 2(\deg_{G}(v)-1)-m_{v} + 1$. If one of the end-vertices of an edge $uv$ in $G$ is a pendant vertex, say $u$, then the degree of the $1$-arc $uv$ in $A_{1}(G)$ is $2(\deg_{G}(v)-1)-m_{v}$.
\end{proof}

\begin{theorem}
	If the $1$si-graph of a graph $G$ is regular, then $G$ is either a star or $\delta(G)>1$.
\end{theorem}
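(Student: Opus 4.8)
The plan is to prove the statement in the following restated form: assuming that $A_1(G)$ is regular and that $\delta(G)=1$, I would show that $G$ must be a star; together with the trivial alternative $\delta(G)>1$, this is exactly the dichotomy claimed. Throughout I take $G$ to be connected, in line with the standing convention for these results, so that $\delta(G)\ge 1$ and the two cases $\delta(G)=1$ and $\delta(G)>1$ are exhaustive. So let $u$ be a pendant vertex of $G$ with unique neighbour $v$, and suppose, for contradiction, that $G$ is not a star.

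First I would observe that $v$ must have a non-pendant neighbour. Indeed, if every neighbour of $v$ were pendant, then by connectedness $V(G)=\{v\}\cup N(v)$ with all of $N(v)$ pendant, so $G\cong K_{1,\deg_G(v)}$ would be a star, contrary to assumption. Fix such a non-pendant neighbour $w$ of $v$, so $\deg_G(w)\ge 2$. I would then check that both $1$-arcs $uv$ and $vw$ are genuine vertices of $A_1(G)$: the arc $uv$ is shuntable because $v$ has a neighbour $w\ne u$, and $vw$ is shuntable because $w$ has a neighbour other than $v$. Thus $uv$ and $vw$ are two distinct vertices of the regular graph $A_1(G)$, and so they must have equal degree.

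The heart of the argument is to compute these two degrees via Theorem~\ref{A1deg1} and derive a contradiction. Writing $p_x$ for the number of pendant neighbours of a vertex $x$, the pendant case of the formula gives $\deg_{A_1(G)}(uv)=2(\deg_G(v)-1)-(p_v-1)=2\deg_G(v)-p_v-1$, since among the pendant neighbours of $v$ we must exclude $u$. The internal case (both $v$ and $w$ have degree greater than $1$) gives $\deg_{A_1(G)}(vw)=2(\deg_G(v)+\deg_G(w)-2)-(p_v+p_w)+1$, where now no pendant neighbour of $v$ is excluded because $w$ is not pendant, and similarly for $w$. Subtracting, the terms involving $\deg_G(v)$ and $p_v$ cancel, leaving $\deg_{A_1(G)}(vw)-\deg_{A_1(G)}(uv)=2\deg_G(w)-p_w-2$.

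Finally I would exploit the fact that $v$ is itself a non-pendant neighbour of $w$, which forces $p_w\le \deg_G(w)-1$. Combined with $\deg_G(w)\ge 2$, this yields $2\deg_G(w)-p_w-2\ge \deg_G(w)-1\ge 1>0$, so the two vertices have strictly different degrees, contradicting the regularity of $A_1(G)$. Hence $G$ must be a star. The one point that needs care—and where I expect the only real friction—is the bookkeeping of which pendant neighbours are excluded in each instance of the formula of Theorem~\ref{A1deg1}: getting the exclusion of $u$ (but not of $w$) right is precisely what makes the difference of degrees collapse to a quantity depending only on $w$, after which the inequality $p_w\le\deg_G(w)-1$ closes the case cleanly.
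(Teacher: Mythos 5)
Your proposal is correct and follows essentially the same route as the paper: both arguments locate a support vertex, compare the degree in $A_1(G)$ of a pendant $1$-arc with that of an internal $1$-arc meeting the same support vertex, and conclude that regularity fails unless $G$ is a star. Your version is in fact more carefully justified than the paper's (which states the internal arc's degree only as ``at least'' one more, with somewhat garbled vertex labels), since you track exactly which pendant neighbours are excluded in each case of Theorem~\ref{A1deg1} and close the gap with the inequality $p_w\le\deg_G(w)-1$.
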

\begin{proof}
	It is enough to prove that if $G$ is a graph which contains both pendant edges and internal edges, then $A_{1}(G)$ is not regular. Consider a support vertex $u$ of $G$ with degree $k$. Let $vu$ be a pendant edge of $G$ and $vw$ be an internal edge of $G$. If there are $k_{1}$ number of pendant vertices with $u$ as the support vertex, then the degree of the vertex $vu$ in $A_{1}(G)$ is $(k_{1}-1)+2(k-k_{1})$, whereas the degree of $vw$ in $A_{1}(G)$ is at least $(k_{1}-1)+2(k-k_{1})+1$. Hence, $A_{1}(G)$ is not regular.
\end{proof}

\begin{theorem}
	The $1$si-graph of a graph $G$ with $\delta>1$ is regular if and only if $G$ is a regular graph or a biregular graph where the sum of the degrees of any two adjacent vertices of $G$ is equal.
\end{theorem}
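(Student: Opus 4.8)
The plan is to reduce the whole statement to the degree formula of Theorem \ref{A1deg1}. Since $\delta(G)>1$, the graph $G$ has no pendant vertices, so for every edge $uv$ both endpoints are internal and the pendant-neighbour counts $m_{u},m_{v}$ vanish. Hence the first case of Theorem \ref{A1deg1} applies to \emph{every} vertex $uv$ of $A_{1}(G)$ and yields
\[
\deg_{A_{1}(G)}(uv)=2\bigl(\deg_{G}(u)+\deg_{G}(v)-2\bigr)+1=2\bigl(\deg_{G}(u)+\deg_{G}(v)\bigr)-3 .
\]
Consequently $A_{1}(G)$ is regular if and only if the edge-degree-sum $\deg_{G}(u)+\deg_{G}(v)$ takes one and the same value, say $c$, on every edge $uv$ of $G$. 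The theorem therefore collapses to the purely graph-theoretic equivalence: the edge-degree-sum of $G$ is constant if and only if $G$ is regular, or $G$ is biregular with the stated equal-sum property.

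For the easy direction I would simply check both cases. If $G$ is $d$-regular then $\deg_{G}(u)+\deg_{G}(v)=2d$ on every edge; if $G$ is biregular in such a way that any two adjacent vertices have the same degree-sum, that is exactly the constancy hypothesis. In either situation every vertex of $A_{1}(G)$ has degree $2c-3$, so $A_{1}(G)$ is regular.

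For the forward direction, assume $\deg_{G}(u)+\deg_{G}(v)=c$ on every edge and take $G$ connected, as elsewhere in the paper. The key local fact is that every neighbour of a fixed vertex $w$ has degree precisely $c-\deg_{G}(w)$; thus all neighbours of $w$ share a common degree, and the neighbours of those neighbours have degree $c-(c-\deg_{G}(w))=\deg_{G}(w)$. Propagating this along a breadth-first search from a base vertex $w_{0}$ of degree $a$, every vertex of $G$ is forced to have degree $a$ or $c-a$ according to the parity of its distance from $w_{0}$, so at most the two values $a$ and $c-a$ occur. If $a=c-a$ then $G$ is regular; otherwise exactly two distinct degrees appear, every edge joins a degree-$a$ vertex to a degree-$(c-a)$ vertex, and $G$ is biregular with constant adjacent-degree-sum. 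This breadth-first propagation, which pins the admissible degrees down to two values and forces all edges to run between the two degree classes, is the real content of the argument; the main obstacle is making that alternation rigorous (and recognising that connectedness is what prevents a third degree from appearing), after which the result follows by substituting $\deg_{G}(u)+\deg_{G}(v)=c$ into Theorem \ref{A1deg1}.
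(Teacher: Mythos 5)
Your proposal is correct and follows essentially the same route as the paper: apply the degree formula of Theorem \ref{A1deg1} with the pendant-neighbour counts vanishing (since $\delta>1$), conclude that regularity of $A_{1}(G)$ is equivalent to the edge-degree-sum $\deg_{G}(u)+\deg_{G}(v)$ being constant, and then propagate this condition across neighbourhoods of a connected graph to force either regularity or the stated biregularity. Your breadth-first, parity-based write-up of the propagation step is merely a more explicit version of the paper's own neighbourhood argument.
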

\begin{proof}
	Since $A_{1}(G)$ is regular, it follows from Theorem \ref{A1deg1} that for any edge $uv$ in $G$, $2(\deg_{G}(u)+\deg_{G}(v)-2)+1=k$. Hence, $\deg_{G}(u)+\deg_{G}(v)=\frac{k+3}{2}$ for all edges in $G$. This implies that all vertices in the neighbourhood of $u$ are of the same degree. Also, if $w\in N(u)$, then for the $1$-arc $wx$ in $G$, $\deg_{G}(x)=\deg_{G}(u)$ and all other neighbours of $w$ should have the same degree. Hence, if $\deg_{G}(u)=\deg_{G}(v)$, then $G$ is regular; else, $G$ is biregular with $\deg_{G}(u)+\deg_{G}(v)=\frac{k+3}{2}$ for all edges in $G$. The converse part of the theorem is immediate from \ref{A1deg1}. 
\end{proof}

%There are biregular graphs with no pendant vertices that are regular and those which are not. The $1$si-graph of $K_{4}-\{e\}$ is a $7$-regular graph, whereas the $1$si graph of any $C_{k}$;$k>4$ with a chord is not regular.

The order of the $P_{3}$-graph which was studied in \cite{broersma1989path} is mentioned in the following theorem. 

\begin{lemma} {\rm \cite{broersma1989path}}
	For a graph $G$, $|V(A_{2}(G))| = \sum\limits_{v \in I(G)} {deg(v) \choose 2}$, where $I(G)$ is the set of all vertices of $G$ with degree greater than $1$. 
\end{lemma}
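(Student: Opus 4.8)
The plan is to set up a bijection between $V(A_{2}(G))$ and the collection of pairs $(v,\{x,y\})$, where $v$ is a vertex of $G$ and $\{x,y\}$ is an unordered pair of distinct neighbours of $v$. Every vertex of $A_{2}(G)$ is a $2$-arc $xvy$ on three distinct vertices, that is, a copy of $P_{3}$ in $G$ with central vertex $v$ and end-vertices $x,y\in N(v)$. The first step is to observe that the central vertex of such a $P_{3}$ is uniquely determined by the arc itself: it is the unique vertex incident with both edges of the path, i.e.\ the degree-two vertex of the subgraph $P_{3}$. Assigning to each $2$-arc its central vertex therefore partitions $V(A_{2}(G))$ by middle vertex, and the core of the argument is to count, for a fixed $v$, how many vertices of $A_{2}(G)$ have $v$ as their centre.

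Next I would check the correspondence in both directions. For a given $2$-arc $xvy$, the centre $v$ satisfies $x,y\in N(v)$ with $x\neq y$, so $\deg(v)\ge 2$ and hence $v\in I(G)$; the end-vertices supply a $2$-element subset $\{x,y\}$ of $N(v)$. Conversely, any $v\in I(G)$ together with any two distinct neighbours $x,y$ determines the path on the vertices $x,v,y$, and distinct unordered pairs yield distinct paths with centre $v$. Because the two orientations $xvy$ and $yvx$ describe the same $P_{3}$, the paths centred at $v$ are in bijection with the $2$-element subsets of $N(v)$, so their number is $\binom{\deg(v)}{2}$.

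Summing over all centres then gives the result. A vertex $w$ with $\deg(w)\le 1$ cannot serve as the centre of any $P_{3}$, and consistently $\binom{\deg(w)}{2}=0$, so the summation may be restricted to $v\in I(G)$ without loss. Combining the partition by central vertex with the per-vertex count produces $|V(A_{2}(G))|=\sum_{v\in I(G)}\binom{\deg(v)}{2}$. The argument is essentially bookkeeping, and the one point that warrants care is the uniqueness of the centre, which is exactly what makes the partition by middle vertex well defined and prevents any path from being counted under two different vertices. I would be most attentive here to the case in which $x,v,y$ are mutually adjacent in $G$ (a triangle): even then the subgraph $P_{3}$ given by the arc $xvy$ contains only the edges $xv$ and $vy$, so its centre is unambiguously $v$, and this path is not conflated with the other two copies of $P_{3}$ sitting inside the triangle.
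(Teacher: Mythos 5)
Your argument is a correct and entirely standard proof of the Broersma--Hoede formula for the order of the path graph $P_{3}(G)$: the (unordered) paths on three vertices are partitioned by their unique central vertex, and those centred at $v$ biject with the $2$-subsets of $N(v)$. That classical counting fact is what the lemma is actually citing, and the paper supplies no proof of its own, so as a proof of the cited result your write-up is fine.

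As a proof of the statement as literally written, however --- with $V(A_{2}(G))$ in the sense of Definition~\ref{Defn1} --- it does not go through, and the identity itself is false for $A_{2}(G)$. Two features of Definition~\ref{Defn1} are lost in your bijection. First, vertices of $A_{2}(G)$ are $2$-arcs, i.e.\ \emph{ordered} sequences; when both $xvy$ and $yvx$ are admissible they are two distinct vertices of $A_{2}(G)$, so the identification ``$xvy$ and $yvx$ describe the same $P_{3}$'' that you build into the argument undercounts. Second, in the opposite direction, Definition~\ref{Defn1}(i) admits a $2$-arc only if it can be shunted onto another $2$-arc along a $3$-arc on distinct vertices, so $2$-arcs whose right end-vertex is pendant, or which are confined to a triangle of degree-two vertices, are not vertices of $A_{2}(G)$ at all; not every $P_{3}$ contributes. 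The paper's own Figure~\ref{fig:Example} is a counterexample to the formula for $A_{2}$: in the bistar $S_{3,2}$ one has $\sum_{v\in I(G)}\binom{\deg(v)}{2}=\binom{4}{2}+\binom{3}{2}=9$, while $A_{2}(S_{3,2})$ has only $5$ vertices; the correct count for $A_{2}(G)$ is the separate and more involved Theorem~\ref{Asnumber2}. So either state that you are proving the formula for $|V(P_{3}(G))|$ (which is how the paper subsequently uses the lemma), or the proof must be rebuilt to track orientation and shuntability, in which case a different formula results.
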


Every path on three vertices is a $2$-arc in $G$. Hence, the above result can be used to find the order of the $2$si-graph of a graph $G$. We define the \emph{internal degree} of a vertex $u$ as the difference between the degree of the vertex in $G$ and the number of pendant vertices in $N[u]$, and denote it as $i$-degree or $\ideg{G}(u)$.

\begin{theorem} \label{Asnumber2}
	The order of the $2$si-graph of a connected graph $G$ of order $n$ is $$\sum\limits_{u_{i} \in I(G)} \Big (n_{i}\ideg{G}(u_{i})  + 2{\ideg{G}(u_{i}) \choose 2} - 2k_{i} - p_{i} \Big )$$, where $I(G)$ is the set of all internal vertices of $G$, $\ideg{G}(u_{i})$ is the internal degree of $u_{i}$, $n_{i}$ is the number of pendant vertices that are neighbours of $u_{i}$, $k_{i}$ and $p_{i}$ is the number of triangles with $u_{i}$ as the common vertex where both the neighbours of $u_{i}$ in these triangles are of degree $2$ in $G$ and only one neighbour of $u_{i}$ in these triangles is of degree $2$ in $G$ respectively.
\end{theorem}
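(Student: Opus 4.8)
The plan is to count directly the vertices of $A_{2}(G)$, that is, the shuntable $2$-arcs on distinct vertices, by organising them according to their middle vertex. First I would establish the key characterisation of shuntability. A $2$-arc $u_{0}u_{1}u_{2}$ on distinct vertices is a vertex of $A_{2}(G)$ precisely when it can be shunted, i.e.\ when there is a $3$-arc $u_{0}u_{1}u_{2}u_{3}$ on distinct vertices; equivalently, when its right end-vertex $u_{2}$ has a neighbour outside $\{u_{0},u_{1}\}$. Unwinding this shows that $u_{0}u_{1}u_{2}$ is shuntable exactly when $u_{2}$ is internal and, in addition, either $u_{0}\not\sim u_{2}$, or $u_{0}u_{1}u_{2}$ is a triangle with $\deg_{G}(u_{2})\ge 3$. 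In particular, a $2$-arc fails to be shuntable exactly when $u_{2}$ is pendant, or when $u_{0}u_{1}u_{2}$ is a triangle whose right end-vertex $u_{2}$ has degree $2$.

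Since the middle vertex $u_{1}$ of any $2$-arc has two distinct neighbours, it is necessarily internal; hence every shuntable $2$-arc is counted exactly once if I sum over all $u_{i}\in I(G)$ taken as the middle vertex. Fixing such a $u_{i}$, I would count the shuntable $2$-arcs $u_{0}u_{i}u_{2}$ by first choosing the right end-vertex $u_{2}$ among the $\ideg{G}(u_{i})$ internal neighbours of $u_{i}$ (pendant right end-vertices being excluded by the characterisation) and the left end-vertex $u_{0}$ among the remaining $\deg_{G}(u_{i})-1$ neighbours, giving $\ideg{G}(u_{i})\bigl(\deg_{G}(u_{i})-1\bigr)$ candidates. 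Writing $\deg_{G}(u_{i})=\ideg{G}(u_{i})+n_{i}$ and expanding, this count splits as
\[
\ideg{G}(u_{i})\bigl(\deg_{G}(u_{i})-1\bigr)=2\binom{\ideg{G}(u_{i})}{2}+n_{i}\,\ideg{G}(u_{i}),
\]
where the first summand counts arcs with both end-vertices internal and the second those with a pendant left end-vertex and an internal right end-vertex.

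The main obstacle is subtracting the correct number of non-shuntable candidates, which is where the triangle terms $k_{i}$ and $p_{i}$ enter. By the characterisation, a candidate $u_{0}u_{i}u_{2}$ fails only when $\deg_{G}(u_{2})=2$ and $u_{0}\sim u_{2}$; since $u_{2}$ then has exactly the two neighbours $u_{i}$ and $u_{0}$, such a forbidden arc corresponds to a triangle $u_{i}u_{2}u_{0}$ through $u_{i}$ whose right end-vertex $u_{2}$ has degree $2$. I would then count these forbidden arcs triangle by triangle: a triangle through $u_{i}$ both of whose $u_{i}$-neighbours have degree $2$ yields two forbidden arcs (either degree-$2$ vertex may serve as $u_{2}$), a triangle with exactly one degree-$2$ neighbour of $u_{i}$ yields a single forbidden arc, and a triangle with no degree-$2$ neighbour of $u_{i}$ yields none. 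Hence the number of forbidden arcs with middle vertex $u_{i}$ is exactly $2k_{i}+p_{i}$, whence the number of shuntable $2$-arcs with middle vertex $u_{i}$ is $2\binom{\ideg{G}(u_{i})}{2}+n_{i}\,\ideg{G}(u_{i})-2k_{i}-p_{i}$; summing over all $u_{i}\in I(G)$ gives the stated formula. The delicate point throughout is the bookkeeping that distinguishes the two triangle types and verifies that every forbidden arc has an internal left end-vertex (so the corrections are drawn solely from the $2\binom{\ideg{G}(u_{i})}{2}$ part), which is exactly what produces the coefficients $2$ and $1$ on $k_{i}$ and $p_{i}$ respectively.
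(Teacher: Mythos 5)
Your proposal is correct and follows essentially the same route as the paper: both proofs organise the shuntable $2$-arcs by their middle vertex $u_{i}\in I(G)$, obtain the terms $n_{i}\,\mathrm{ideg}_{G}(u_{i})$ and $2\binom{\mathrm{ideg}_{G}(u_{i})}{2}$ by splitting according to whether the left end-vertex is pendant or internal, and then subtract $2k_{i}+p_{i}$ for the triangles through $u_{i}$ whose degree-$2$ vertices block the shunt. Your explicit up-front characterisation of shuntability (the right end-vertex must have a neighbour outside the arc) makes the bookkeeping slightly cleaner than the paper's case discussion, but the decomposition and the corrections are identical.
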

\begin{proof}
	As mentioned earlier, only those $2$-arcs which can be shunted onto some other $2$-arc in $G$ for which the union of the two $2$-arcs is a $3$-arc on distinct vertices in $G$ are vertices in $A_{2}(G)$. Hence, if there exists a $2$-arc $u_{i}u_{j}u_{k}$ which can be shunted onto only one other $2$-arc $u_{j}u_{k}u_{i}$, then $u_{i}u_{j}u_{k}$ is not a vertex of $A_{2}(G)$, since $u_{i}u_{j}u_{k}u_{i}$ is not a $3$-arc where every vertex is distinct. Also, if the right end-vertex of a $2$-arc in $G$ is a pendant vertex in $G$, then this $2$-arc is not a vertex of $A_{2}(G)$ since it cannot be shunted onto any other $2$-arc in $G$. Therefore, depending on the degree of each neighbor of a vertex in $G$, we have the following types of vertices in $I(G)$. The first type of vertices are those vertices whose neighbours are pendant vertices of $G$. The second type of vertices are vertices of degree 2 in $G$ which are vertices of a $C_{3}$ and the remaining set of vertices are the third type of vertices.
	
	Consider the set of vertices in the set $I(G)$ which are adjacent to some pendant vertices of $G$. Since these vertices are the support vertices of $G$, few $2$-arcs with these support vertices as the middle vertex has a pendant vertex of $G$ as one end-vertex. If there are $n_{i}$ number of pendant vertices adjacent to the vertex $u_{i}$, then the number of $2$-arcs in $G$ with one end-vertex as the pendant vertex of $G$ and the support vertex $u_{i}$ as the middle vertex of the $2$-arc is $2n_{i}{\ideg{G}(u_{i}) \choose 1}$. The $1$-arc with the left end-vertex as the pendant vertex and the support vertex $u_{i}$ as the right end-vertex is the tail of a $2$-arc in $G$ which we shall denote as $\alpha$. This implies that the $\alpha$ is a vertex in $A_{2}(G)$ and the $2$-arc obtained by reversing the order of the vertices of $\alpha$ will not be a vertex in $A_{2}(G)$. Hence, the total number of vertices in $A_{2}(G)$ with the left end-vertex as the pendant vertex of $G$ and the support vertices as the middle vertex of the $2$-arc is $\sum\limits_{u_{i} \in I(G)} n_{i}{\ideg{G}(u_{i}) \choose 1}$.
	
	From the remaining set of vertices in $I(G)$, by choosing any two vertices from the neighbourhood of these vertices, we obtain the number of $2$-arcs with vertices in $I(G)$ as middle vertices and vertices of degree at least $2$ in $G$ as both the end-vertices of the $2$-arcs. Since the end-vertices of each of these $2$-arcs, say $u_{i}u_{j}u_{k}$ are not pendant vertices, both $u_{i}u_{j}u_{k}$ and $u_{k}u_{j}u_{i}$ can be shunted onto other $2$-arcs in $G$. Hence, the number of these $2$-arcs is $2\sum\limits_{u_{i} \in I(G)} {\ideg{G}(u_{i}) \choose 2}$.  
	
	Out of these remaining set of vertices $u_{i}$ in $I(G)$, we consider the set of vertices which are vertices of some $C_{3}$ in $G$. If both the neighbours of $u_{i}$ which are vertices of the same $C_{3}$ are not adjacent to any other vertex in $G$, then the $2$-arc with $u_{i}$ as the middle vertex, and these two neighbours as the end vertex will not be a vertex in $A_{2}(G)$. If $u_{i}$ is a vertex of $k_{i}$ number of such $C_{3}$'s, then we remove $2k_{i}$ number of such $2$-arcs from $2\sum\limits_{u_{i} \in I(G)} {\ideg{G}(u_{i}) \choose 2}$ number of $2$-arcs in $G$. Similarly, if only one neighbour of $u_{i}$ in a $C_{3}$ is not adjacent to any other vertex in $G$, then the $2$-arc with this neighbour as the right end-vertex and the third vertex of the same $C_{3}$ as the left end-vertex, which shall be denoted as $\beta$ cannot be shunted onto any other $2$-arc in $G$. However, $\beta'$ can be shunted onto some other $2$-arc in $G$ implying that this $2$-arc is a vertex in $A_{2}(G)$. If $u_{i}$ is a vertex of $p_{i}$ number of such $C_{3}$'s, then we remove $p_{i}$ number of $2$-arcs from $2\sum\limits_{u_{i} \in I(G)} {\ideg{G}(u_{i}) \choose 2}$ number of $2$-arcs in $G$. Hence, the total number of $2$-arcs which are vertices of the $2$si-graph of $G$ with vertices in $I(G)$ as middle vertices and vertices of degree at least $2$ in $G$ as both the end-vertices of the $2$-arcs are $2\sum\limits_{u_{i} \in I(G)} n_{i}{\ideg{G}(u_{i}) \choose 2} - 2k_{i} - p_{i}$. 
	
	Concluding from the above arguments, the total number of vertices in $A_{2}(G)$ is $\sum\limits_{u_{i} \in I(G)} \Big (n_{i}{\ideg{G}(u_{i}) \choose 1} + 2{\ideg{G}(u_{i}) \choose 2} - 2k_{i} - p_{i} \Big )$.
\end{proof}

\begin{corollary} \label{Asnumber22}
	The order of the $2$si-graph of a $C_{3}$-free graph $G$ is $\sum\limits_{u_{i} \in I(G)} \Big (n_{i}{\ideg{G}(u_{i}) \choose 1} + 2{\ideg{G}(u_{i}) \choose 2} \Big )$, where $I(G)$ is the set of all internal vertices of $G$, $\ideg{G}(u_{i})$ is the internal degree of $u_{i}$ and $n_{i}$ is the number of pendant vertices that are neighbours of $u_{i}$.
\end{corollary}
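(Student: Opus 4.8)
The plan is to derive this directly from Theorem \ref{Asnumber2} by specialising the general order formula to the case where $G$ contains no triangle. Recall that in that formula the correction terms $-2k_{i}$ and $-p_{i}$ were introduced precisely to account for those $2$-arcs $u_{j}u_{i}u_{k}$ in which the two end-vertices $u_{j}$ and $u_{k}$, together with the middle vertex $u_{i}$, form a $C_{3}$ in $G$. Specifically, $k_{i}$ counts the triangles through $u_{i}$ in which both neighbours of $u_{i}$ on the triangle have degree $2$, and $p_{i}$ counts the triangles through $u_{i}$ in which exactly one of those neighbours has degree $2$. Both quantities are therefore supported entirely on triangles of $G$.

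First I would observe that since $G$ is $C_{3}$-free, no triangle passes through any vertex of $G$; in particular, no triangle passes through any internal vertex $u_{i}\in I(G)$. Consequently, both triangle-counting quantities vanish for every such vertex, that is, $k_{i}=0$ and $p_{i}=0$ for all $u_{i}\in I(G)$. Substituting these values into the expression of Theorem \ref{Asnumber2} collapses each summand to $n_{i}{\ideg{G}(u_{i}) \choose 1} + 2{\ideg{G}(u_{i}) \choose 2}$, and summing over $I(G)$ yields precisely the claimed order.

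There is no genuine obstacle here; the corollary is an immediate specialisation. The only point meriting care is the recognition that the subtractive terms in the parent theorem are localised entirely on the triangles of $G$, so that the hypothesis of $C_{3}$-freeness is exactly the condition forcing them to be zero. No new combinatorial count is required beyond the case analysis already completed in the proof of Theorem \ref{Asnumber2}.
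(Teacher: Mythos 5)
Your proposal is correct and is essentially the same argument as the paper's: the corollary follows immediately from Theorem \ref{Asnumber2} upon observing that $C_{3}$-freeness forces $k_{i}=p_{i}=0$ for every $u_{i}\in I(G)$. Your additional remarks simply make explicit why those terms vanish, which matches the paper's one-line justification.
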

\begin{proof}
	The result follows as an immediate consequence of Theorem \ref{Asnumber2} when $k_{i}=p_{i}=0$.
\end{proof}

\begin{corollary} \label{Asnumber2cor1}
	The order of the $2$si-graph of a $C_{3}$-free graph $G$ with $\delta > 1$ is $2\sum\limits_{v \in V(G)} {deg(v) \choose 2}$.
\end{corollary}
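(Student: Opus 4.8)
The plan is to specialise Corollary \ref{Asnumber22} to the additional hypothesis $\delta > 1$. The key observation is that $\delta > 1$ forces $G$ to have no pendant vertices at all, and this collapses every pendant-related term in the general $C_{3}$-free count.

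First I would record the three simplifications that follow from the absence of pendant vertices. Since every vertex of $G$ has degree at least $2$, every vertex is internal, so $I(G) = V(G)$. For each vertex $u_{i}$, the number $n_{i}$ of pendant neighbours is $0$, so the summand $n_{i}{\ideg{G}(u_{i}) \choose 1}$ vanishes. Finally, because $N[u_{i}]$ contains no pendant vertex, the internal degree agrees with the ordinary degree, that is, $\ideg{G}(u_{i}) = \deg_{G}(u_{i})$ for every $u_{i} \in V(G)$, directly from the definition of $i$-degree.

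Substituting these three facts into the formula of Corollary \ref{Asnumber22} removes the first summand entirely and replaces $\ideg{G}(u_{i})$ by $\deg_{G}(u_{i})$ throughout, leaving only $\sum_{u_{i} \in V(G)} 2{\deg_{G}(u_{i}) \choose 2} = 2\sum\limits_{v \in V(G)} {deg(v) \choose 2}$, which is precisely the claimed order. There is no genuine obstacle here; the only point worth checking is that the definition of internal degree indeed reduces to the ordinary degree once pendant vertices are excluded, after which the corollary follows by direct substitution.
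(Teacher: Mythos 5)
Your proposal is correct and follows essentially the same route as the paper: the paper's proof also just sets $n_{i}=k_{i}=p_{i}=0$ in the general count and reads off $2\sum_{v \in V(G)} \binom{\deg(v)}{2}$. Your version is slightly more explicit in spelling out that $I(G)=V(G)$ and that the internal degree coincides with the ordinary degree once pendant vertices are excluded, but this is the same argument.
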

\begin{proof}
	Since $n_{i}=k_{i}=p_{i}=0$, the total number of vertices in $A_{2}(G)$ is $2\sum\limits_{u_{i} \in V(G)} {deg(u_{i}) \choose 2}$.
\end{proof}

\begin{corollary} \label{Asnumber2cor2}
	The order of the $2$si-graph of any connected graph $G$ with $\delta > 2$ is $2\sum\limits_{v \in V(G)} {deg(v) \choose 2}$.
\end{corollary}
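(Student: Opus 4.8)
The plan is to derive this corollary directly from the order formula of Theorem~\ref{Asnumber2} by checking, one summand at a time, which terms are forced to vanish under the hypothesis $\delta(G) > 2$. Since every vertex of $G$ then has degree at least $3$, no vertex is pendant, so $I(G) = V(G)$ and the count $n_i$ of pendant neighbours of each $u_i$ is zero; consequently the first term $n_i\,\ideg{G}(u_i)$ drops out for every $i$. Moreover, with no pendant vertices present, the closed neighbourhood $N[u_i]$ contains none, so the internal degree coincides with the ordinary degree, $\ideg{G}(u_i) = \deg_G(u_i)$, which lets me replace $\ideg{G}(u_i)$ by $\deg(u_i)$ in the surviving binomial term.

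Next I would treat the correction terms $k_i$ and $p_i$, which is where the only genuine content lies. These count triangles through $u_i$ in which \emph{both} (respectively, exactly one) of the two neighbours of $u_i$ has degree $2$ in $G$. Under $\delta(G) > 2$ no vertex of $G$ has degree $2$ at all, so neither kind of triangle can occur; hence $k_i = p_i = 0$ for every $u_i \in V(G)$, irrespective of whether $G$ actually contains triangles. Substituting $n_i = 0$, $\ideg{G}(u_i) = \deg(u_i)$, and $k_i = p_i = 0$ into the formula collapses each summand to $2\binom{\deg(u_i)}{2}$, and summing over $V(G)$ yields the claimed value $2\sum_{v \in V(G)}\binom{\deg(v)}{2}$.

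The step I expect to need the most care is precisely the vanishing of $k_i$ and $p_i$, since this is what distinguishes the present corollary from Corollary~\ref{Asnumber2cor1}, which forbids triangles outright. The point to emphasise is that $\delta(G) > 2$ does not eliminate triangles from $G$, but it does eliminate degree-$2$ vertices, and because the definitions of $k_i$ and $p_i$ are phrased entirely in terms of degree-$2$ neighbours of $u_i$, both quantities must be zero. Once this observation is recorded, no case analysis is required and the remaining computation is a one-line substitution into Theorem~\ref{Asnumber2}.
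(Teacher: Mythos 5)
Your proposal is correct and follows essentially the same route as the paper: both arguments substitute into Theorem~\ref{Asnumber2}, observing that $\delta(G)>2$ forces $n_i=0$ (no pendant vertices, so $I(G)=V(G)$ and $\ideg{G}(u_i)=\deg_G(u_i)$) and $k_i=p_i=0$ (no degree-$2$ vertices, so the triangle corrections vanish even though triangles may be present). Your write-up is merely more explicit than the paper's about \emph{why} $k_i$ and $p_i$ vanish, which is a point worth recording.
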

\begin{proof}
	For any graph $G$ with $\delta > 2$, even though $G$ may contain a $C_{3}$ as an induced subgraph, $k_{i}=p_{i}=0$. Also $n_{i}=0$ since $G$ does not contain any pendant vertices. Therefore, the total number of vertices in $A_{2}(G)$ is $2\sum\limits_{u_{i} \in V(G)} {deg(u_{i}) \choose 2}$.
\end{proof}

It is rather challenging to find the size of $A_{2}(G)$ and hence we avoid this result. We now look at a few characterisations of the $1$si-graph of a graph $G$. It is interesting to note that the line graph of $G$ is an induced subgraph of $A_{1}(G)$ which will be proved in the results that follow. Further, various properties of $L(G)$ are used to obtain some characterisations of $A_{1}(G)$.

\begin{lemma} \label{1arcsubgraph}
	For every graph $G$ on at least three vertices, $L(G)$ is an induced subgraph of the $1$si-graph of $G$.
\end{lemma}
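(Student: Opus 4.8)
The plan is to exhibit an explicit injection $\phi$ from $E(G)$ (the vertex set of $L(G)$) into $V(A_1(G))$ and show that the subgraph of $A_1(G)$ induced on the image $\phi(E(G))$ is isomorphic to $L(G)$. The guiding observation is that a $1$-arc $uv$ occupies exactly the two vertices $u$ and $v$ of $G$, so by Definition \ref{Defn1}(ii) two $1$-arcs are adjacent in $A_1(G)$ precisely when their underlying edges share an endpoint in $G$ -- which is exactly the adjacency rule of the line graph. Thus adjacency in $A_1(G)$ is insensitive to the chosen orientation, and the entire difficulty reduces to choosing, for each edge, one orientation that actually qualifies as a vertex of $A_1(G)$.

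First I would define $\phi$. For an edge $e=uv$ with $\deg_G(u)>1$ and $\deg_G(v)>1$, either orientation is a vertex of $A_1(G)$, so I set $\phi(e)=uv$; for a pendant edge $e=uv$ with, say, $\deg_G(u)=1$, I set $\phi(e)=uv$, taking the pendant vertex $u$ as the left-end vertex. I then verify that each $\phi(e)$ is genuinely a vertex of $A_1(G)$: the $1$-arc $uv$ belongs to $V(A_1(G))$ iff its right-end vertex $v$ has a neighbour $w\neq u$, i.e. iff $\deg_G(v)\geq 2$, in which case $uvw$ is a $2$-arc on distinct vertices onto which $uv$ is shunted. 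For an internal edge this is clear; for a pendant edge $uv$ with $u$ pendant, connectedness together with $|V(G)|\geq 3$ forces $\deg_G(v)\geq 2$, so $\phi(e)$ is well defined. This is the only place where the hypotheses are used: they exclude an isolated edge $K_2$, whose single edge admits no shuntable orientation.

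Next I would check injectivity and the induced-subgraph property. Injectivity is immediate, since the $1$-arc $\phi(e)$ determines the unordered pair $\{u,v\}$, so distinct edges map to distinct $1$-arcs. For the induced condition, take distinct edges $e=uv$ and $f=xy$. Since $\phi(e)$ and $\phi(f)$ have vertex sets $\{u,v\}$ and $\{x,y\}$, they intersect in $G$ iff $\{u,v\}\cap\{x,y\}\neq\emptyset$, i.e. iff $e$ and $f$ share an endpoint; by Definition \ref{Defn1}(ii) this gives $\phi(e)\sim\phi(f)$ in $A_1(G)$ iff $e\sim f$ in $L(G)$, where both implications are needed to obtain an \emph{induced} subgraph rather than merely a subgraph. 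Hence $\phi$ is an isomorphism of $L(G)$ onto the subgraph of $A_1(G)$ induced by $\phi(E(G))$. I expect the only genuine obstacle to be the orientation bookkeeping -- guaranteeing a shuntable orientation for every edge (the pendant-edge case) and deliberately including exactly one orientation per edge, so that the reverse arcs $vu$, which are also vertices of $A_1(G)$ for internal edges, are excluded from the induced copy.
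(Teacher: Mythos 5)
Your proposal is correct and follows essentially the same route as the paper's proof: select one shuntable orientation per edge (possible for every edge of a connected graph on at least three vertices, since some endpoint has degree at least $2$), and observe that vertex-intersection of $1$-arcs coincides with edge-adjacency, so these vertices induce a copy of $L(G)$. Your write-up is merely more explicit about the orientation bookkeeping and the two directions needed for the ``induced'' claim.
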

\begin{proof}
	Each edge $xy$ is a $1$-arc of the graph $G$. Since $G$ is connected, $xy$ is a vertex if it can be shunted onto another $1$-arc in $G$. If $xy$ cannot be shunted onto any other $1$-arc in $G$, then $G \cong K_{2}$ and $A_{1}(K_{2})$ is an empty graph. Since $G$ is a graph on at least three vertices, every edge $xy$ of $G$ is a vertex in $A_{1}(G)$. These vertices induce the line graph of $G$ in $A_{1}(G)$.
\end{proof}

\begin{theorem}
	The $1$si-graph of a graph $G$ is isomorphic to $L(G)$ if and only if $G$ is $K_{1,n}$.
\end{theorem}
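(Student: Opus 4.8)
The plan is to prove the equivalence through a comparison of orders, backed up by one short structural fact about graphs whose every edge is pendant. Throughout I take $G$ to be connected on at least three vertices (the standing hypothesis under which the order formulas apply), and I recall that $L(G)$ has exactly $m$ vertices and that, by Lemma \ref{1arcsubgraph}, $L(G)$ sits inside $A_{1}(G)$ as an induced subgraph.

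For the forward implication, suppose $A_{1}(G) \cong L(G)$. Since isomorphic graphs have the same order, I would equate $|V(A_{1}(G))|$ with $|V(L(G))|$. By Proposition \ref{Asnumber1} the former equals $2m-m_{1}$, while the latter is $m$; hence $2m-m_{1}=m$, forcing $m_{1}=m$. Thus every edge of $G$ is a pendant edge. The key structural step is then to show that a connected graph on at least three vertices in which every edge is pendant must be a star. I would argue that the internal vertices (those of degree at least $2$) form an independent set, because an edge joining two such vertices would fail to be pendant; moreover a pendant vertex, having degree $1$, cannot lie on any path between two distinct internal vertices. Connectivity therefore allows at most one internal vertex, and with at least three vertices present this single internal vertex must be adjacent to every other vertex, i.e.\ $G \cong K_{1,n}$.

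For the converse, suppose $G = K_{1,n}$ with centre $c$ and leaves $\ell_{1},\dots,\ell_{n}$. Here I would identify precisely which $1$-arcs are vertices of $A_{1}(G)$: an arc $c\ell_{i}$ cannot be shunted onto anything, since $\ell_{i}$ has degree $1$ and no $2$-arc on distinct vertices begins with $c\ell_{i}$, whereas each arc $\ell_{i}c$ can be shunted onto $c\ell_{j}$ for any $j\neq i$. Consequently the vertex set of $A_{1}(G)$ is exactly $\{\ell_{1}c,\dots,\ell_{n}c\}$, and since all of these arcs share the common vertex $c$, they are pairwise adjacent; hence $A_{1}(K_{1,n}) \cong K_{n}$. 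As $L(K_{1,n}) \cong K_{n}$ as well, the two graphs are isomorphic, completing the equivalence.

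I expect the only real obstacle to be the structural lemma in the forward direction, namely turning the purely numerical conclusion $m_{1}=m$ into the statement that $G$ is a star. The argument itself is short, but it relies essentially on the connectivity hypothesis to rule out both disjoint pendant edges ($K_{2}$ components) and several mutually unreachable internal vertices; the vertex count alone does not suffice, so this step must be carried out by hand rather than read off from the order formula.
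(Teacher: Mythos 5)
Your proof is correct and follows essentially the same route as the paper: the forward direction equates orders to force $m_{1}=m$ (every edge pendant) and then deduces $G$ is a star from connectivity, while the converse identifies $V(A_{1}(K_{1,n}))$ as exactly the $n$ arcs $\ell_{i}c$ and observes they pairwise intersect at $c$. Your version is somewhat more careful than the paper's, in that you invoke the order formula of Proposition~\ref{Asnumber1} explicitly and spell out the structural step from ``every edge is pendant'' to ``$G$ is a star,'' which the paper passes over quickly.
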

\begin{proof}
	Suppose that the $1$si of a graph $G$ is isomorphic to $L(G)$. Then, the number of vertices in $A_{1}(G)$ is $m$. This is possible only if the vertex set of $A_{1}(G)$ has every edge of $G$ as its vertex set with no edge repeated in the collection. For every $1$-arc $xy$ in $G$ that can be shunted onto another $1$-arc in $G$, $yx$ will not be a vertex in $A_{1}(G)$ only if $x$ is a pendant vertex in $G$. Since, there are $m$ number of $1$-arcs in $G$ and one end-vertex of each $1$-arc is a pendant vertex in $G$, in order for $G$ to be connected, $G$ will $K_{1,m}$.
	
	Conversely, if $G \cong K_{1,n}$ and $y$ is the central vertex of $K_{1,n}$, then for every $1$-arc $xy$ in $K_{1,n}$, there exists another $1$-arc $yz$ in $K_{1,n}$ such that $xy$ can be shunted onto $yz$. Also, the $1$-arc $zy$ can be shunted onto the $1$-arc $yx$ but the $1$-arcs $yx$ and $yz$ cannot be shunted onto any other $1$-arc in $G$ since $x$ and $z$ are pendant vertices of $G$. Hence, $xy$ and $zy$ are vertices in $A_{1}(G)$ implying that all edges are vertices in $A_{1}(G)$ without any repetition. By Definition \ref{Defn1}, it is obvious that $A_{1}(G)$ is $L(K_{1,m})$.
\end{proof}

\begin{corollary} \label{AsKn}
	A complete graph of order $n \geq 2$ vertices is realisable from the $1$si-graph of $K_{1,n}$. 
\end{corollary}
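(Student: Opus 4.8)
The plan is to read this off directly from the immediately preceding theorem, which establishes that $A_{1}(K_{1,n}) \cong L(K_{1,n})$, and to combine it with the elementary fact that the line graph of a star is complete. First I would recall the structure of the star: $K_{1,n}$ consists of a single central vertex, say $y$, together with $n$ pendant vertices, and therefore has exactly $n$ edges, each one incident to $y$.

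Next I would observe that any two distinct edges of $K_{1,n}$ meet at the common vertex $y$, so the corresponding two vertices of $L(K_{1,n})$ are adjacent. Since this holds for every pair of the $n$ edge-vertices, every pair is adjacent, and hence $L(K_{1,n})$ is the complete graph $K_{n}$ on those $n$ vertices. Invoking the preceding theorem then yields the chain $A_{1}(K_{1,n}) \cong L(K_{1,n}) \cong K_{n}$, which is exactly the assertion that the complete graph on $n \geq 2$ vertices is realisable as the $1$si-graph of $K_{1,n}$.

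I do not anticipate a genuine obstacle here, since the statement is a direct corollary of the theorem proved just above; the only point requiring care is the order bookkeeping, namely that $K_{1,n}$ has $n$ edges (not $n+1$), so its line graph is $K_{n}$ and the realised complete graph has order $n$. The lower bound $n \geq 2$ is precisely what guarantees that $A_{1}(K_{1,n})$ is non-empty and that $K_{1,n}$ has at least three vertices, so that the hypotheses needed to apply the earlier results are met.
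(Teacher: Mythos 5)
Your proposal is correct and follows essentially the same route as the paper: both invoke the preceding theorem that $A_{1}(K_{1,n}) \cong L(K_{1,n})$ and then use the fact that all $n$ edges of the star share the central vertex, so the line graph is $K_{n}$. Your version merely spells out the line-graph step in slightly more detail.
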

\begin{proof}
	Since $A_{1}(K_{1,n})$ is the line graph of $K_{1,n}$, the $1$si-graph of $K_{1,n}$ is a complete graph on $n$ vertices.
\end{proof}

Although the diameter of $K_{1,n}$ is $2$, the $2$-si graph of $K_{1,n}$ is an empty graph since there exists no $2$-arc in $K_{1,n}$ that can be shunted onto another $2$-arc in $K_{1,n}$.

Recall that the line graph of a $k$-regular graph $G$ is a $(2k-2)$-regular graph. The following result is an immediate consequence of this result.

\begin{lemma} \label{1arcregualr}
	If $G$ is a $k$-regular graph, $k>1$, then $A_{1}(G)$ is a $(4k-3)$-regular graph.
\end{lemma}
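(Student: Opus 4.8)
The plan is to realise this as an immediate consequence of the degree formula in Theorem~\ref{A1deg1}, using the line graph count as the conceptual underpinning suggested by the preceding remark. First I would observe that since $G$ is $k$-regular with $k>1$, $G$ has no pendant vertices at all; hence by Corollary~\ref{Asnumber1cor} every $1$-arc of $G$ is a vertex of $A_{1}(G)$, and for each edge of $G$ both antiparallel arcs $uv$ and $vu$ appear. It therefore suffices to compute $\deg_{A_{1}(G)}(uv)$ for an arbitrary edge $uv$ and to check that the answer is independent of the chosen edge.

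For the count itself I would invoke the first case of Theorem~\ref{A1deg1}. Since both endpoints are internal vertices with $\deg_{G}(u)=\deg_{G}(v)=k$ and, having no pendant neighbours, $m_{u}=m_{v}=0$, substitution gives
\[
\deg_{A_{1}(G)}(uv)=2\bigl(\deg_{G}(u)+\deg_{G}(v)-2\bigr)-(m_{u}+m_{v})+1=2(2k-2)+1=4k-3.
\]
As this value does not depend on the edge $uv$, every vertex of $A_{1}(G)$ has degree $4k-3$, and so $A_{1}(G)$ is $(4k-3)$-regular.

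Alternatively, and more in the spirit of the remark preceding the statement, I would derive the same number directly from the structure of $L(G)$. The vertex of $L(G)$ corresponding to $uv$ is adjacent to exactly $2k-2$ other edges of $G$, namely the $k-1$ remaining edges at $u$ together with the $k-1$ remaining edges at $v$. In $A_{1}(G)$ each such adjacent edge contributes its two antiparallel $1$-arcs, both of which meet $uv$ in a common vertex, yielding $2(2k-2)$ neighbours; adding the single reverse arc $vu$, which shares both endpoints with $uv$, gives $2(2k-2)+1=4k-3$. The two computations agree, and either one completes the proof.

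I do not anticipate a genuine obstacle here, as the argument is a straightforward specialisation of an already-established degree formula. The only point requiring a little care is the bookkeeping of the reverse arc $vu$ and the avoidance of double-counting it against the line-graph contribution, which is precisely what the additive ``$+1$'' term in the formula of Theorem~\ref{A1deg1} encodes.
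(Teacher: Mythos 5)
Your proof is correct and matches the paper's argument: the paper likewise uses the fact that $L(G)$ is $(2k-2)$-regular, that both antiparallel arcs of every edge appear since $G$ has no pendant vertices, and that each vertex $xy$ picks up the closed neighbourhood of $yx$ in the second copy of $L(G)$, giving $(2k-2)+(2k-2)+1=4k-3$. Your first route, specialising the degree formula of Theorem~\ref{A1deg1} with $m_u=m_v=0$, is just a more modular packaging of the same count.
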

\begin{proof}
	Since $G$ has no pendant vertices, for every $1$-arc $xy$ in $G$, the $1$-arc $yx$ is also a vertex in $A_{1}(G)$. Hence, $A_{1}(G)$ will have two $L$ as an induced subgraph of $G$. Also, each vertex $xy$ in one copy of $L$ will be adjacent to all the vertices in the closed neighbourhood of $yx$ in the second copy of $L$. Since the line graph of a $k$-regular graph is $(2k-2)$-regular, the degree of each vertex in $A_{1}(G)$ is $(2k-2) + (2k-2) +1 = 4k-3$.
\end{proof}

\begin{corollary}
	The $1$si-graph of a complete graph $K_{n}$, $n \geq 3$, is a $(4n-7)$-regular graph with two copies of the $L(K_{n})$ as its induced subgraph.
\end{corollary}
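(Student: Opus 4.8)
The plan is to obtain both parts of the statement as a direct specialisation of Lemma~\ref{1arcregualr}, exploiting the fact that $K_{n}$ is regular. First I would note that $K_{n}$ is $(n-1)$-regular, and that the hypothesis $n \geq 3$ ensures $k := n-1 \geq 2 > 1$, so the regularity hypothesis of Lemma~\ref{1arcregualr} is met with $k = n-1$. Substituting into the degree formula $4k-3$ then gives $4(n-1)-3 = 4n-7$, which yields that $A_{1}(K_{n})$ is $(4n-7)$-regular. This part is purely the arithmetic of the substitution once the lemma is invoked.

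For the claim about the two induced copies of $L(K_{n})$, I would appeal to the structure already exhibited in the proof of Lemma~\ref{1arcregualr}. Since $\delta(K_{n}) = n-1 > 1$, Corollary~\ref{Asnumber1cor} guarantees that both ordered $1$-arcs $xy$ and $yx$ associated with each edge $\{x,y\}$ of $K_{n}$ are vertices of $A_{1}(K_{n})$. Fixing a reference orientation of the edges of $K_{n}$ partitions $V(A_{1}(K_{n}))$ into two families, each in bijection with $E(K_{n})$; within either family two $1$-arcs are adjacent in $A_{1}(K_{n})$ precisely when the underlying edges share a vertex of $K_{n}$. By Lemma~\ref{1arcsubgraph} this adjacency pattern is exactly that of $L(K_{n})$, so each family induces a copy of $L(K_{n})$.

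Since the result is a corollary of Lemma~\ref{1arcregualr}, no substantial obstacle arises. The only point meriting a line of care is that the two orientation families are disjoint and that each induces the \emph{full} line graph rather than a proper subgraph; both are immediate here because $K_{n}$ has no pendant vertices, so no $1$-arc of any edge is excluded from $V(A_{1}(K_{n}))$ and each of the two copies of $L(K_{n})$ appears complete.
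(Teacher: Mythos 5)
Your proposal is correct and follows essentially the same route as the paper: invoke Lemma~\ref{1arcregualr} with $k=n-1$ to get $(4n-7)$-regularity, and use the absence of pendant vertices (so both $xy$ and $yx$ are vertices) together with Lemma~\ref{1arcsubgraph} to exhibit the two induced copies of $L(K_{n})$. Your explicit partition of $V(A_{1}(K_{n}))$ into two orientation families is a slightly more careful phrasing of the same observation the paper makes.
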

\begin{proof}
	This is immediate from Lemma \ref{1arcsubgraph} and Theorem \ref{1arcregualr} since $k=n-1$. Since $k>1$, for every $1$-arc $xy$ in $E(K_{n})$, the $1$-arc $yx$ is also a vertex in $A_{1}(K_{n})$ inducing two copies of $L(K_{n})$ as induced subgraphs of $A_{1}(K_{n})$.
\end{proof}

The $1$si-graph and the $2$si-graph of a bistar is discussed below.  Recall that a bistar, denoted by $S_{m,n}$, is the graph obtained by joining the centre of two stars with an edge.

\begin{theorem}
	The graph $(K_{m} \cup K_{n})+P_{2}$ is realisable from the $1$si-graph of the bistar $S_{m,n}$.
\end{theorem}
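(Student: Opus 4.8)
The plan is to identify explicitly the vertex set and adjacency structure of $A_1(S_{m,n})$ and match it against the join $(K_m \cup K_n) + P_2$. First I would fix notation: let $x_1, x_2$ be the two central vertices of the bistar joined by the central edge $x_1x_2$, let $u_1, \ldots, u_m$ be the pendant vertices adjacent to $x_1$, and let $v_1, \ldots, v_n$ be the pendant vertices adjacent to $x_2$.

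Next I would determine which $1$-arcs of $S_{m,n}$ are vertices of $A_1(S_{m,n})$. A $1$-arc $ab$ is a vertex precisely when it can be shunted onto some $1$-arc $bc$ with $c \neq a$, that is, when its right end-vertex $b$ has a neighbour other than $a$. Since every pendant vertex has degree one, any $1$-arc whose right end-vertex is a pendant cannot be shunted. Hence the vertices of $A_1(S_{m,n})$ are exactly the $m$ arcs $u_i x_1$, the $n$ arcs $v_j x_2$, and the two central arcs $x_1 x_2$ and $x_2 x_1$, giving $m+n+2$ vertices in total, consistent with Proposition \ref{Asnumber1} (here $S_{m,n}$ has $m+n+1$ edges of which $m+n$ are pendant).

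Then I would read off the adjacencies using the rule that two vertices of $A_1$ are adjacent if and only if the corresponding $1$-arcs share a vertex of $S_{m,n}$. The $m$ arcs $u_i x_1$ pairwise share $x_1$ and so induce a $K_m$; the $n$ arcs $v_j x_2$ pairwise share $x_2$ and induce a $K_n$; and no arc $u_i x_1$ meets any arc $v_j x_2$, since the vertex sets $\{u_i,x_1\}$ and $\{v_j,x_2\}$ are disjoint, so these two cliques are vertex-disjoint with no edges between them, yielding $K_m \cup K_n$. Finally, each central arc contains both $x_1$ and $x_2$, and is therefore adjacent to every $u_i x_1$ (sharing $x_1$) and every $v_j x_2$ (sharing $x_2$), while the two central arcs are mutually adjacent (sharing $x_1$ and $x_2$); thus $\{x_1x_2,\, x_2x_1\}$ induces a $P_2$ whose two vertices are both adjacent to all remaining vertices.

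Collecting these observations, the map sending $u_i x_1$ to the $i$-th vertex of $K_m$, $v_j x_2$ to the $j$-th vertex of $K_n$, and the two central arcs to the two vertices of $P_2$ is an isomorphism onto $(K_m \cup K_n) + P_2$, because the join edges are exactly the adjacencies contributed by the two universal central arcs. The main care-point is the bookkeeping of the shunting condition under the distinct-vertex requirement, namely verifying that the orientations $x_1 u_i$ and $x_2 v_j$ are excluded as vertices while \emph{both} orientations of the central edge survive, together with confirming that these two central arcs are genuinely universal in $A_1(S_{m,n})$, which is what realises the \emph{join} rather than a mere disjoint union.
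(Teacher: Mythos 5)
Your proposal is correct and follows essentially the same route as the paper: identify the $m+n+2$ surviving $1$-arcs (pendant-to-centre arcs plus both orientations of the central edge), observe that the pendant arcs at each centre form disjoint cliques $K_m$ and $K_n$, and note that the two central arcs are universal and mutually adjacent, yielding the join with $P_2$. Your version is in fact slightly more careful than the paper's, since you explicitly verify the disjointness of the two cliques and spell out the isomorphism.
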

\begin{proof}
	If $xy$ is the edge that is adjacent to all the pendant vertices of $S_{m,n}$, both $xy$ and $yx$ are vertices of $A_{1}(S_{m,n})$ since the degree of both $x$ and $y$ in $G$ is greater than $1$. The $1$-arcs with $x$ as the right end-vertex and the pendant vertices as the left end-vertex is a vertex in $A_{1}(S_{m,n})$ but the $1$-arc obtained by reversing the order of the vertices in these $1$-arcs is not a vertex in $A_{1}(S_{m,n})$ since there exists no vertex adjacent to the pendant vertex in $G$ for which this $1$-arc can be shunted onto. The same argument can be applied for those $1$-arcs with $y$ as the right end-vertex. All $m$ number of $1$-arcs with $x$ as the right end-vertex induce a completed graph on $m$ vertices and the $n$ number of $1$-arcs with $y$ as the right end-vertex induce a completed graph on $n$ vertices in $A_{1}(G)$. Since $xy$ and $yx$ are adjacent to all other $1$-arcs in $G$, the vertices $xy$ and $yx$ in $A_{1}(G)$ are adjacent to all vertices in both $K_{m}$ and $K_{n}$. Also, the components $K_{m}$ and $K_{n}$ are disjoint. Hence, the result follows. 
\end{proof}

\begin{proposition}
	The $2$si-graph of the bistar $S_{m,n}$ is a complete graph on $m+n$ vertices.
\end{proposition}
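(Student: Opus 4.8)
The plan is to first pin down the vertex set of $A_{2}(S_{m,n})$ explicitly, and then observe that completeness is forced because every admissible $2$-arc passes through the central edge. Write $x$ and $y$ for the two adjacent central vertices of the bistar, let $u_{1},\dots,u_{m}$ be the pendant neighbours of $x$ and $v_{1},\dots,v_{n}$ the pendant neighbours of $y$. Since the middle vertex of any $2$-arc must have degree at least $2$, it has to be one of the central vertices $x$ or $y$; hence every $2$-arc on distinct vertices is of one of the forms $u_{i}xu_{j}$, $u_{i}xy$, $yxu_{i}$, $v_{i}yv_{j}$, $v_{j}yx$, or $xyv_{j}$.

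Next I would apply the shunting condition of Definition \ref{Defn1}(i). A $2$-arc $abc$ is a vertex of $A_{2}(G)$ precisely when there is a $3$-arc $abcd$ on distinct vertices, i.e.\ when its right end-vertex $c$ admits a neighbour distinct from $a,b,c$; in $S_{m,n}$ this fails exactly when $c$ is a pendant vertex. This eliminates $u_{i}xu_{j}$, $yxu_{i}$, $v_{i}yv_{j}$ and $xyv_{j}$ (all of which have a pendant right end-vertex), while $u_{i}xy$ survives, being shuntable onto $xyv_{j}$, and $v_{j}yx$ survives, being shuntable onto $yxu_{i}$ (here I use $m,n\ge 1$, so the opposite central vertex always supplies an unused pendant neighbour as the continuation $d$). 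Thus the vertex set is exactly $\{u_{i}xy:1\le i\le m\}\cup\{v_{j}yx:1\le j\le n\}$, of cardinality $m+n$. The same count also drops out of Corollary \ref{Asnumber22}, since $S_{m,n}$ is a tree (hence $C_{3}$-free) with $I(G)=\{x,y\}$, $n_{x}=m$, $n_{y}=n$ and $\ideg{G}(x)=\ideg{G}(y)=1$, so $\binom{\ideg{G}(\cdot)}{2}=0$ and the sum collapses to $m+n$.

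Finally, I would observe that each surviving $2$-arc contains both central vertices: $u_{i}xy$ uses the edge $xy$, and so does $v_{j}yx$. Consequently any two vertices of $A_{2}(S_{m,n})$ meet in the set $\{x,y\}$, so by Definition \ref{Defn1}(ii) every pair is adjacent. Therefore $A_{2}(S_{m,n})$ is complete on its $m+n$ vertices, that is, $A_{2}(S_{m,n})\cong K_{m+n}$.

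The only real work lies in the second step, the careful elimination of the non-shuntable $2$-arcs, which mirrors the case analysis used in the proof of Theorem \ref{Asnumber2}. Once the vertex set is identified, no genuine obstacle remains: completeness is immediate because every admissible $2$-arc is forced through the central edge $xy$, so all pairwise intersections are automatically non-empty.
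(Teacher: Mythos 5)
Your proof is correct and follows essentially the same route as the paper's: identify the $m+n$ admissible $2$-arcs as exactly those whose head is the central $1$-arc $xy$ or $yx$ (one per pendant vertex), then conclude completeness because every such $2$-arc contains the central edge, so all pairwise intersections are non-empty. You are merely more explicit than the paper in enumerating and eliminating the non-shuntable $2$-arcs, and the cross-check against Corollary~\ref{Asnumber22} is a nice confirmation but not a new idea.
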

\begin{proof}
	The number of vertices in $A_{2}(G)$ is $m+n$ since for each pendant vertex, either the $1$-arc $xy$ or $yx$ will be the head of these $2$-arcs. Since $xy$ or $yx$ is the head of any $2$-arc in $G$, any two $2$-arcs will intersect at one vertex at least implying that $A_{2}(G)$ is a complete graph on $m+n$ vertices. 
\end{proof}

A subset $D \subseteq V(G)$ is a dominating set if every vertex in $G$ is either an element of $D$ or is adjacent to a vertex in $D$ (see \cite{haynes2023fundamentals}). A minimum dominating set is a minimal dominating set of minimum cardinality and the cardinality of the minimum dominating set in $G$ is called the domination number of $G$, denoted by $\gamma(G)$ (refer \cite{haynes2023fundamentals}). The domination number of the line graph of a graph $G$ is equal to the edge domination number of $G$ (see \cite{murugan2018domination}). Edge domination of a graph $G$, introduced in (see \cite{hedetniemi1977edge}), is a subset $S'$ of $E(G)$ for which every edge not in $S'$ is adjacent to some edge in $S'$. The edge domination number $\gamma'(G)$ (or $\gamma'$, in short) of $G$ is the minimum cardinality taken over all edge dominating sets of $G$. Since edges are vertices of $A_{1}(G)$, we can obtain the domination number of $A_{1}(G)$ from the edge domination number of $G$. The edge domination number of $G$ is the lower bound of the domination number of $A_{1}(G)$. 

\begin{theorem}
	For any connected graph $G$, $\gamma(A_{1}(G))=\gamma(L)=\gamma'(G)$.
\end{theorem}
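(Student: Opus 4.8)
The plan is to first invoke the cited equality $\gamma(L)=\gamma'(G)$, so that the whole statement reduces to establishing $\gamma(A_{1}(G))=\gamma'(G)$. The key structural fact, already isolated in Lemma \ref{1arcsubgraph} and Proposition \ref{Asnumber1}, is that the vertices of $A_{1}(G)$ are exactly the orientations of edges of $G$: every internal edge $xy$ contributes both $1$-arcs $xy$ and $yx$, while a pendant edge contributes only the single $1$-arc having its pendant vertex as the left-end vertex. Two such vertices are adjacent in $A_{1}(G)$ precisely when their underlying edges share an endpoint (in particular the two orientations $xy,yx$ of an internal edge are always adjacent, since they intersect in both $x$ and $y$). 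I would therefore work throughout with the projection $\pi$ sending each $1$-arc to its underlying edge of $G$; by Lemma \ref{1arcsubgraph} the map $\pi$ is surjective onto $E(G)$, as every edge of the connected graph $G$ on at least three vertices admits at least one orientation that is a vertex of $A_{1}(G)$.

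For the upper bound $\gamma(A_{1}(G))\le\gamma'(G)$, I would start from a minimum edge dominating set $S'$ of $G$ and, for each $e\in S'$, select a single $1$-arc $\widehat{e}\in\pi^{-1}(e)$ that is a vertex of $A_{1}(G)$ (for a pendant edge this forces the orientation with the pendant vertex as left-end vertex, for an internal edge either orientation works). Put $D=\{\widehat{e}:e\in S'\}$, so that $|D|=|S'|=\gamma'(G)$. To see that $D$ dominates $A_{1}(G)$, take any vertex $\alpha$ and set $f=\pi(\alpha)$. Since $S'$ is edge dominating, either $f\in S'$ --- in which case $\alpha$ is $\widehat{f}$ itself or shares both endpoints with $\widehat{f}$ and is hence dominated --- or $f$ shares an endpoint with some $e\in S'$, whence $\alpha$ and $\widehat{e}$ intersect in a vertex of $G$ and so are adjacent in $A_{1}(G)$. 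In either case $\alpha$ is dominated, giving the bound.

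For the lower bound $\gamma(A_{1}(G))\ge\gamma'(G)$, I would take a minimum dominating set $D$ of $A_{1}(G)$ and set $S'=\pi(D)\subseteq E(G)$, so that $|S'|\le|D|=\gamma(A_{1}(G))$. It then suffices to verify that $S'$ is an edge dominating set. Given any edge $f$ of $G$, choose an orientation $\alpha$ of $f$ that is a vertex of $A_{1}(G)$; since $D$ dominates, either $\alpha\in D$, giving $f=\pi(\alpha)\in S'$, or $\alpha$ is adjacent to some $\beta\in D$, so that $\pi(\beta)\in S'$ either equals $f$ or shares an endpoint with $f$. Thus every edge of $G$ is covered by $S'$, so $\gamma'(G)\le|S'|\le\gamma(A_{1}(G))$. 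Combining the two inequalities yields $\gamma(A_{1}(G))=\gamma'(G)$, and together with $\gamma(L)=\gamma'(G)$ this gives the stated triple equality.

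The only delicate point --- and the step I would handle most carefully --- is the asymmetry caused by pendant edges, where exactly one orientation survives as a vertex of $A_{1}(G)$. This is precisely why the surjectivity of $\pi$ matters and why, in the upper-bound construction, the representative $\widehat{e}$ of a pendant edge must be taken as the orientation with the pendant vertex as its left-end vertex. Once this bookkeeping is in place, both the lifting and the projection arguments go through verbatim, since adjacency in $A_{1}(G)$ is a faithful ``directed'' copy of the vertex-sharing adjacency that defines $L(G)$.
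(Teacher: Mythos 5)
Your proof is correct and follows essentially the same route as the paper: both arguments rest on the correspondence between vertices of $A_{1}(G)$ and edges of $G$ (with adjacency depending only on whether the underlying edges share an endpoint), combined with the cited identity $\gamma(L)=\gamma'(G)$. Your write-up is in fact more complete than the paper's, since you explicitly establish the lower bound $\gamma(A_{1}(G))\ge\gamma'(G)$ via the projection argument, a step the paper dismisses as obvious.
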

\begin{proof}
	Since each vertex $u_{i}u_{j}$ in $A_{1}(G)$ dominates its neighbours as well as the closed neighbourhood of $u_{j}u_{i}$ and $L$ is an induced subgraph of $A_{1}(G)$, a dominating set of order $\gamma(L)$ is a dominating set of $A_{1}(G)$. It is obvious that at least $\gamma(L)$ vertices is required to dominate all vertices of $G$. Hence, $\gamma(A_{1}(G))=\gamma(L)=\gamma'(G)$. 
\end{proof}

\subsection{Some Isomorphism Problems of the ssi-Graph of a Graph}

When it comes to discussing about isomorphism of $A_{s}(G)$, there are many questions of interest that arise. Does there exist a simple connected graph $G$ for which $A_{s}(G) \cong G$? Are there pairs of non-isomorphic connected graphs $G_{1}$ and $G_{2}$ such that $A_{s}(G_{1}) \cong A_{s}(G_{2})$? In the case of the line graph of a graph $G$, the only graph for which $L(G) \cong G$ is a cycle $C_{n}$. Also, it is well known that $K_{1,3}$ and $K_{3}$ are the only pair of non-isomorphic connected graphs that have the same line graph (see \citeonline{whitney1992congruent}). %Similarly, there are many classes of non-isomorphic connected graphs that have the same path graphs (see \cite{broersma1989path}). Similar to line graphs, a cycle on $n$ vertices is the only graph where $P_{k}(G) \cong G;3 \leq k \leq l$. 

It is obvious from Proposition \ref{Asconn2} and Corollary \ref{AsKn} that there are connected graphs $G$ where $A_{s}(G) \cong A_{1}(K_{1,n}); s \geq \lfloor \frac{n}{2} \rfloor$. We try to discuss the isomorphism between the ssi-graph of simple, connected graphs $G$ for same values of $s$ as well as different values of $s$.

\begin{theorem} \label{A1iso}
	The order of the $1$-shunt intersection of a graph $G$ on $n$ vertices is $n$ if and only of $G$ is a bistar $S_{m_{1},m_{2}}$. 
\end{theorem}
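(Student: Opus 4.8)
The plan is to convert the hypothesis on $|V(A_{1}(G))|$ into a numerical equation via Proposition \ref{Asnumber1}, then squeeze $G$ between the connectivity bound and a leaf-count so as to force $G$ to be a tree, and finally read off the bistar structure from the number of internal vertices.

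First I would dispatch the backward direction by a direct count. For a bistar $S_{m_1,m_2}$ there are two internal (centre) vertices joined by an edge, together with $m_1+m_2$ pendant vertices, so $n=m_1+m_2+2$, $m=m_1+m_2+1$, and the number of pendant edges equals $m_1+m_2$ (each pendant vertex contributes exactly one pendant edge). Substituting into Proposition \ref{Asnumber1} gives $|V(A_{1}(G))| = 2m-(m_1+m_2) = 2(m_1+m_2+1)-(m_1+m_2) = m_1+m_2+2 = n$, as required.

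For the forward direction, suppose $|V(A_{1}(G))|=n$. By Proposition \ref{Asnumber1} this reads $2m-p=n$, where I write $p$ for the number of pendant vertices (equivalently pendant edges, since no edge of a connected graph on at least three vertices joins two pendant vertices). Rewriting, $2m=n+p$. I would then invoke two facts: connectivity gives $m\ge n-1$, while the existence of at least one internal vertex in a connected graph on at least three vertices gives $p\le n-1$, so $m=(n+p)/2\le n-\tfrac12$; since $m$ is an integer, $m\le n-1$. Combining the two bounds yields $m=n-1$, so $G$ is a tree.

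The decisive step is then counting the internal vertices of this tree. From $m=n-1$ and $2m-p=n$ I obtain $p=n-2$, so $G$ has exactly two internal vertices. It remains to note the small structural fact that a tree with exactly two internal vertices is a bistar: in a tree the non-leaf vertices induce a subtree (every vertex on the path between two internal vertices is itself internal), so the two internal vertices $u,v$ must be adjacent, and each must carry at least one leaf (otherwise it would have degree one), giving $G\cong S_{m_1,m_2}$. I expect the main obstacle to be the clean argument that $G$ is a tree — in particular the integrality step upgrading $m\le n-\tfrac12$ to $m\le n-1$ — together with the structural lemma on trees with two internal vertices; everything else is arithmetic.
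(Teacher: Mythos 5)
Your proof is correct and follows essentially the same route as the paper: both directions reduce to the count $2m-m_{1}=n$ from Proposition \ref{Asnumber1}, force $m=n-1$ so that $G$ is a tree with exactly $n-2$ leaves, and finish by observing that a tree with two internal vertices is a bistar. The only differences are cosmetic improvements on your side: you rule out $m\geq n$ by the single inequality $p\leq n-1$ plus integrality, where the paper splits into the cases $m>n$ (a cycle forces at least three extra vertices in $A_{1}(G)$) and $m=n$ (all edges pendant forces a star, contradicting $m=n$), and you also write out the converse computation for $S_{m_{1},m_{2}}$, which the paper leaves implicit.
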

\begin{proof}
	For graphs $G$ where $m>n$, $G$ contains at least one cycle implying that the number of vertices in $A_{1}(G)$ includes a minimum of additionally three vertices apart from all the $m$ edges in $G$. Hence, the order of $A_{1}(G)$ is greater than $m$ which is greater than $n$. If $m=n$, then $A_{1}(G)$ contains $n$ vertices only if $2m-m_{1}=m$ implying that the number of pendant edges in $G$ is $m$. If every edge is a pendant edge, this would imply that $G$ is $K_{1,m}$, which is a contradiction. If $m<n$, then the order of $A_{1}(G)$ in this case is $n$ if $2m-m_{1}=m+1$ since $m=n-1$. This implies that $m_{1}=m-1=n-2$, which means that $G$ has only two vertices whose degree is strictly greater than $1$. Also, since $G$ is connected, these two vertices are either adjacent or there exists a path in $G$ joining these two vertices. Since the remaining vertices are of degree $1$, these two vertices are adjacent in $G$ and the remaining vertices are adjacent to exactly one of these vertices implying that $G$ is a bistar.
\end{proof}

\begin{corollary}
	There is no graph $G$ such that $G \cong A_{1}(G)$.
\end{corollary}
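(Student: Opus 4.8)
The plan is to combine Theorem~\ref{A1iso} with the structural description of the $1$si-graph of a bistar obtained just above. First I would observe that any isomorphism $G \cong A_{1}(G)$ forces the two graphs to have the same order; writing $n = |V(G)|$, this says $|V(A_{1}(G))| = n$. By Theorem~\ref{A1iso}, the order of $A_{1}(G)$ coincides with the order of $G$ \emph{precisely} when $G$ is a bistar $S_{m_{1},m_{2}}$. Hence the only candidates for a graph satisfying $G \cong A_{1}(G)$ are the bistars, and it suffices to rule these out.

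Next I would invoke the theorem established earlier, that the $1$si-graph of a bistar satisfies $A_{1}(S_{m_{1},m_{2}}) \cong (K_{m_{1}} \cup K_{m_{2}}) + P_{2}$. The contradiction then comes from a structural mismatch: a bistar is obtained by joining the centres of two stars with an edge, so it is a tree and therefore acyclic, whereas $(K_{m_{1}} \cup K_{m_{2}}) + P_{2}$ always contains a triangle. Concretely, in $A_{1}(S_{m_{1},m_{2}})$ the two $1$-arcs traversing the central edge in each direction play the role of $P_{2}$ and are adjacent to every other vertex; together with any single vertex of $K_{m_{1}}$ they induce a $C_{3}$. Since isomorphic graphs cannot differ in whether they contain a cycle, the graph $A_{1}(S_{m_{1},m_{2}})$ cannot be isomorphic to the acyclic graph $S_{m_{1},m_{2}}$.

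The argument is essentially immediate once Theorem~\ref{A1iso} has pinned down the candidate graphs, so I do not expect a serious obstacle; the only point requiring a moment's care is the small or degenerate bistars. I would check that these are handled as well: a genuine bistar has at least one leaf on each side, so both $K_{m_{1}}$ and $K_{m_{2}}$ are nonempty, guaranteeing the triangle, and the two central $1$-arcs are always present because both centres have degree greater than $1$. Thus no bistar is self-$1$si, and combined with the reduction of the first paragraph this yields that no graph $G$ satisfies $G \cong A_{1}(G)$.
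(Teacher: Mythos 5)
Your proposal is correct and follows essentially the same route as the paper: both reduce via Theorem~\ref{A1iso} to the case where $G$ is a bistar and then exhibit a basic invariant on which $S_{m_1,m_2}$ and its $1$si-graph disagree. The paper compares sizes (a bistar on $n$ vertices has $n-1$ edges while $A_1(S_{m,n})$ has more), whereas you use acyclicity against the triangle forced by the join with $P_2$; both discriminators are immediate once the reduction is in place.
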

\begin{proof}
	This follows from Theorem \ref{A1iso}, since the size of $A_{1}(S_{m,n})$ is greater than $n-1$.
\end{proof}

For $s \geq 1$, it is rather challenging to check whether there exists a graph $G$ such that $G \cong A_{s}(G)$. We try to first characterise graphs for which the order of the $s$-shunt intersection graph is the order of the graph. This still does not solve the problem since there may be other parameters for which the graphs may not be isomorphic.

\begin{theorem} \label{A2Iso}
	A graph $G$ and its $2$-shunt intersection graph has the same order if $\delta \leq 2$. 
\end{theorem}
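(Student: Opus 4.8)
The plan is to compute $|V(A_2(G))|$ directly from the counting formula of Theorem~\ref{Asnumber2} and to show that, under the hypothesis $\delta(G)\le 2$, the resulting sum equals $n=|V(G)|$. Writing $I(G)$ for the set of internal vertices, Theorem~\ref{Asnumber2} gives
\[
|V(A_2(G))|=\sum_{u_i\in I(G)}\Big(n_i\,\ideg{G}(u_i)+2\binom{\ideg{G}(u_i)}{2}-2k_i-p_i\Big),
\]
so the first move is to partition $V(G)$ into pendant vertices, vertices of degree exactly $2$, and vertices of degree at least $3$, and to record how each class feeds the sum. Pendant vertices are absent from $I(G)$ and enter only through the numbers $n_i$ attached to their support vertices, so the whole count is driven by the internal vertices, and the target is to match that count against $n$ vertex by vertex.

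Next I would evaluate the contribution of each degree-$2$ vertex explicitly, since for such a vertex $u_i$ one has $\ideg{G}(u_i)\in\{0,1,2\}$. Concretely, a degree-$2$ vertex with no pendant neighbour contributes $2\binom{2}{2}=2$ (adjusted by $-2k_i-p_i$ when it lies on a triangle of degree-$2$ vertices), one with a single pendant neighbour contributes $n_i\,\ideg{G}(u_i)=1$, and one with two pendant neighbours contributes $0$. The aim of this step is to collect these small, explicit contributions into a running total and to exhibit a bookkeeping — ideally a bijection between the $2$-arcs counted and the vertices of $G$ — that forces the total to equal $n$; the degree-$3$-or-larger terms would then have to be shown either absent or exactly compensated.

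The step I expect to be the main obstacle is precisely controlling vertices of degree at least $3$, which are \emph{not} ruled out by $\delta(G)\le 2$, since that hypothesis bounds only the minimum degree. Any such vertex $u_i$ contributes a quadratic term $2\binom{\ideg{G}(u_i)}{2}\ge 2\binom{3}{2}=6$, so a single high-degree vertex can already inflate the count beyond what a vertex-by-vertex balance with $n$ can absorb; and even among graphs all of whose degrees are at most $2$, the endpoint contributions behave differently from the interior ones, so making the total come out to exactly $n$ is delicate. The heart of the proof is therefore to show that these contributions conspire to give exactly $n$, and pinning down which graphs with $\delta(G)\le 2$ realise this equality — together with any additional structural hypothesis the statement must be read to carry for the balance to hold — is where I expect the real work of the argument to concentrate.
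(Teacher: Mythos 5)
There is a genuine gap here, and it is one you half-detected yourself: you set out to prove the wrong direction of the implication. Read literally, the statement says that $\delta(G)\le 2$ implies $|V(A_2(G))|=|V(G)|$, and that claim is false: in $C_3$ every vertex has degree $2$, yet no $2$-arc can be shunted onto another along a $3$-arc on four distinct vertices, so $A_2(C_3)$ is empty and has order $0\neq 3$; similarly $A_2(K_{1,n})$ is empty (the paper notes this explicitly) even though $\delta=1$. The paper's own proof reveals that the intended statement is the converse (``only if''): if $|V(A_2(G))|=|V(G)|$ for a connected graph $G$, then $\delta\le 2$. It argues by contradiction, assuming the orders are equal \emph{and} $\delta>2$. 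Consequently, no amount of the bookkeeping you propose in your second and third steps can close the argument as you framed it — the ``additional structural hypothesis the statement must be read to carry'' that you suspect at the end is in fact a reversal of the implication, not a missing side condition.

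Your approach is also much heavier than what the paper needs: since the contradiction hypothesis is $\delta>2$, the general formula of Theorem~\ref{Asnumber2} with its $n_i$, $k_i$, $p_i$ corrections never enters; Corollary~\ref{Asnumber2cor2} gives directly $2\sum_{v\in V(G)}\binom{\deg(v)}{2}=n$, whence $\sum_{v}\deg(v)^2=n+2m$ by the handshake lemma, and the paper finishes by comparing $\sum_v \deg(v)^2$ with $\bigl(\sum_v \deg(v)\bigr)^2$ to contradict connectedness. (That last step is itself stated loosely in the paper; a cleaner finish along the same lines: $\delta\ge 3$ gives $\deg(v)^2\ge 3\deg(v)$, so $n+2m\ge 6m$, i.e.\ $n\ge 4m$, while $2m=\sum_v\deg(v)\ge 3n$ gives $m\ge 3n/2$, and together $n\ge 6n$, which is impossible.) Note that your key observation — that a vertex of degree at least $3$ contributes a quadratic term that inflates the count beyond any vertex-by-vertex balance with $n$ — is exactly the engine of this contradiction; the fix is to run it in the contrapositive direction rather than trying to force equality whenever $\delta\le 2$.
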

\begin{proof}
	Let $G$ be a graph of order and size $n$ and $m$ respectively such that the order of $A_{2}(G)$ is $n$ and assume to the contrary $\delta > 2$. Since $2\sum\limits_{v \in V(G)} {deg(v) \choose 2}=n$, it follows that $n$ is even and $\sum\limits_{v \in V(G)} (deg(v))^2 = n + 2m$ by the hand-shaking lemma. Also, since $\sum\limits_{v \in V(G)} (deg(v))^2 < (\sum\limits_{v \in V(G)} deg(v))^2$, it follows that $n < 2m(2m-1)$ implying that $G$ is always a disconnected graph, a contradiction. 
\end{proof}

\begin{proposition}
	There is no connected $C_{3}$-free graph $G$ with $\delta > 1$ such that the order of the $2$-arc intersection graph of $G$ is the order of $G$.
\end{proposition}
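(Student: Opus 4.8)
The plan is to derive a purely numerical contradiction directly from the order formula already available for this graph class. Since $G$ is connected, $C_3$-free, and has $\delta > 1$, Corollary~\ref{Asnumber2cor1} applies and gives
\[
|V(A_2(G))| = 2\sum_{v \in V(G)} \binom{\deg_G(v)}{2}.
\]
Assuming for contradiction that this quantity equals $n = |V(G)|$, I would first simplify the right-hand side: expanding the binomial coefficient as $2\binom{\deg_G(v)}{2} = \deg_G(v)^2 - \deg_G(v)$ and invoking the handshaking lemma $\sum_{v} \deg_G(v) = 2m$ converts the hypothesis into the single degree identity
\[
\sum_{v \in V(G)} \deg_G(v)^2 = n + 2m.
\]

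Next I would extract two independent consequences of the constraint $\delta \geq 2$. First, since every vertex satisfies $\deg_G(v) \geq 2$, we have $\deg_G(v)^2 \geq 2\deg_G(v)$ for each $v$ (because $\deg_G(v)\big(\deg_G(v)-2\big) \geq 0$); summing and using the handshaking lemma gives $\sum_{v} \deg_G(v)^2 \geq 2\sum_{v} \deg_G(v) = 4m$. Combined with the identity above, this forces $n + 2m \geq 4m$, that is $n \geq 2m$. Second, the bound $\deg_G(v) \geq 2$ applied directly to the handshaking lemma yields $2m = \sum_{v} \deg_G(v) \geq 2n$, so that $m \geq n$.

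Putting these two inequalities together produces $n \geq 2m \geq 2n$, hence $n \geq 2n$, which is impossible for any graph on $n \geq 1$ vertices. This contradiction shows that no such graph $G$ can exist. The argument is essentially a short counting estimate, and I do not anticipate a genuine obstacle, since the entire structural difficulty has already been absorbed into the order formula of Corollary~\ref{Asnumber2cor1}. The one point to handle with care is the simultaneous use of the minimum-degree hypothesis in two opposite directions---once to bound $\sum_{v}\deg_G(v)^2$ from below in terms of $m$, and once to bound $m$ from below in terms of $n$---which is exactly what renders the two estimates incompatible.
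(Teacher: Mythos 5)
Your proof is correct, and it is worth noting that it diverges from the paper's own argument at exactly the point where the paper's argument is weakest. Both you and the paper start from Corollary~\ref{Asnumber2cor1} and reduce the hypothesis to the identity $\sum_{v}\deg_G(v)^2=n+2m$. The paper then invokes the \emph{upper} bound $\sum_{v}\deg_G(v)^2<\bigl(\sum_{v}\deg_G(v)\bigr)^2$ to conclude $n<2m(2m-1)$ and asserts that this forces $G$ to be disconnected; but $n<2m(2m-1)$ is satisfied by essentially every connected graph (indeed $m\geq n-1$ makes $2m(2m-1)$ far exceed $n$), so no contradiction actually materialises there. You instead use two \emph{lower} bounds extracted from $\delta\geq 2$ --- namely $\sum_{v}\deg_G(v)^2\geq 4m$ and $2m\geq 2n$ --- which genuinely collide: $n\geq 2m\geq 2n$ is impossible. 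This is a sound and complete argument where the paper's is not. For what it is worth, there is an even shorter route you could have taken: since $\deg_G(v)\geq 2$ gives $\binom{\deg_G(v)}{2}\geq 1$ for every vertex, the order formula immediately yields $|V(A_2(G))|=2\sum_{v}\binom{\deg_G(v)}{2}\geq 2n>n$, with no need for the handshaking lemma at all.
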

\begin{proof}
	Since the order of $A_{2}(G)$ is $2\sum\limits_{v \in V(G)} {deg(v) \choose 2}$, the result is clear from the arguments mentioned in the proof of Theorem \ref{A2Iso}.
\end{proof}

Due to various challenges faced in obtaining a condition for which the order of $A_{2}(G)$ is the order of $G$ or the size of $G$, where $G$ is either acyclic or girth of $G$ is $3$, we leave this as an open problem. There are certainly graphs for which the order of $A_{s}(G)$ is the order of $G$. A path on $2s+2$ vertices is such a graph. Similarly, a path on $2s+1$ vertices is a graph whose $s$-shunt intersection graph has order equal to the size of $G$. The first step in studying the isomorphism between $G$ and it's ssi-graph is to characterise graphs for which the order of $G$ equals the order of $A_{s}(G)$. We leave the problem of obtaining conditions for which the order of $A_{s}(G)$ for any possible values of $s$ equals the order or size of $G$ as an open problem. 

\section{Conclusion}

In this article, we have introduced a new intersection graph from the set of all $s$-arcs on distinct vertices of $G$ that can be shunted onto some other $s$-arc on distinct vertices of $G$ and the properties of these derived graphs are studied. Further, the $1$si-graph and the $2$si-graph of a graph $G$ has been investigated. Various problems related to the isomorphism of ssi-graphs with other graph classes are open. In addition to these, it is interesting to study the structural characteristics of ssi-graphs of a graph $G$ for any possible value of $s$. All these facts highlight the scope for further research in this area.

%---------------- Bibliography ------------------------
%\bibliographystyle{ieeetr}
\bibliographystyle{abbrv}
\bibliography{myref}
%\nocite{*}

\end{document}